\newtheorem{theorem}{Theorem}[section]
\newtheorem{example}[theorem]{Example}
\newtheorem{lemma}[theorem]{Lemma}
\title[Generalized Bregman Projection Algorithms ]{Generalized Bregman Projection Algorithms for Solving Nonlinear Split Feasibility Problems in Infinite-Dimensional Spaces}
\author[S. Hashemi Sababe]{Saeed Hashemi Sababe}
\address[S. Hashemi Sababe]{R\&D Section, Data Premier Analytics, Edmonton, Canada.}
\email{Hashemi\_1365@yahoo.com}
\author[E. L. Ghasab]{Ehsan Lotfali Ghasab$^*$}
\address[E. L. Ghasab]{Department of Mathematics, Jundi-Shapur University of Technology, Dezful, Iran}
\email{e.l.ghasab@jsu.ac.ir}
\thanks{$^*$ Corresponding author}
\subjclass[2020]{47H05, 47J25, 49M27, 65K10, 90C25.}
\keywords{Split feasibility problem, nonlinear operators, Bregman projection, infinite-dimensional spaces, proximal gradient method, inertial techniques}
\begin{document}
\sloppy

\maketitle

\begin{abstract}
This paper introduces generalized Bregman projection algorithms for solving nonlinear split feasibility problems $(SFPs)$ in infinite-dimensional Hilbert spaces. The methods integrate Bregman projections, proximal gradient steps, and adaptive inertial terms to enhance convergence. Strong convergence is established under mild assumptions, and numerical experiments demonstrate the efficiency and robustness of the proposed algorithms in comparison to classical methods. These results contribute to advancing optimization techniques for nonlinear and high-dimensional problems.
\end{abstract}

\section{Introduction}
The split feasibility problem $(SFP)$ is a fundamental model in optimization and computational mathematics. Given two real Hilbert spaces $H_1$ and $H_2$ with closed, convex subsets $\mathcal{C} \subseteq H_1$ and $\mathcal{Q} \subseteq H_2$, and a bounded linear operator $\mathcal{A}: H_1 \to H_2$, the $SFP$ is formulated as:
\[
\zeta^* \in \mathcal{C} \quad \text{such that} \quad \mathcal{A} \zeta^* \in \mathcal{Q}.
\]
Originally introduced by Censor and Elfying in 1994 \cite{Censor1994}, $SFPs$ have found extensive applications in areas such as signal processing, image reconstruction, and machine learning \cite{Combettes1996, Byrne2004}. The $\mathcal{C}\mathcal{Q}$ algorithm proposed by Byrne \cite{Byrne2002} has become one of the most popular iterative methods for solving $SFPs$. Extensions of the $\mathcal{C}\mathcal{Q}$ algorithm have been extensively studied to improve convergence properties and computational efficiency \cite{Lopez2012, Anh2019}.

Recent advancements in optimization have seen the use of Bregman projections and distances as a generalization of traditional metric projections \cite{Bregman1967, Eckstein1993}. The Bregman distance, derived from a strictly convex and differentiable function, provides a flexible alternative to the squared norm distance, enabling the design of algorithms with enhanced convergence properties. While classical studies primarily focused on linear operators and finite-dimensional settings, there is growing interest in extending these methods to nonlinear operators and infinite-dimensional spaces \cite{Xu2011, Reich2016}.

In this paper, we build on this line of research by addressing several open challenges in the literature. Most existing methods for solving $SFPs$ rely on linear operator assumptions. We generalize these results to accommodate nonlinear and monotone operators. To improve algorithmic flexibility, we investigate the use of a broader class of Bregman functions, capturing diverse problem geometries. We integrate Bregman projections with proximal gradient techniques, which are well-suited for composite optimization problems. Recognizing the computational challenges in infinite-dimensional spaces, we propose adaptive inertial terms and preconditioning strategies to accelerate convergence.\\

\noindent
The remainder of this paper is organized as follows. In Section~\ref{sec:preliminaries}, we provide the necessary mathematical background and introduce the Bregman distance and projection. Section~\ref{sec:algorithms} presents the proposed algorithms and their theoretical properties. Convergence analysis is provided in Section~\ref{sec:convergence}, and Section~\ref{sec:numerical} illustrates the performance of the algorithms through numerical experiments. Finally, we conclude with a summary of findings and future research directions in Section~\ref{sec:conclusion}.

\section{Preliminaries}
\label{sec:preliminaries}

In this section, we present the fundamental definitions, lemmas, and theorems that form the theoretical foundation for the proposed algorithms. These include the Bregman distance, Bregman projection, and relevant properties of operators in Hilbert spaces.

Let $H$ be a real Hilbert space with an inner product $\langle \cdot, \cdot \rangle$ and norm $\|\cdot\|$. Let $\mathcal{C}$ and $\mathcal{Q}$ be nonempty, closed, and convex subsets of $H$. For a bounded linear operator $\mathcal{A}: H \to H$, the adjoint operator $\mathcal{A}^*: H \to H$ satisfies:
\[
\langle \mathcal{A}\zeta, \varsigma \rangle = \langle \zeta, \mathcal{A}^*\varsigma \rangle, \quad \forall \zeta, \varsigma \in H.
\]

The Bregman distance, introduced by Bregman in \cite{Bregman1967}, generalizes the squared Euclidean distance and is defined using a strictly convex and differentiable function $\varpi: H \to \mathbb{R}$. The Bregman distance $\mathcal{D}_\varpi: \text{dom} \varpi \times \text{dom} \nabla \varpi \to [0, \infty)$ is given by:
\[
\mathcal{D}_\varpi(\zeta, \varsigma) = \varpi(\zeta) - \varpi(\varsigma) - \langle \nabla \varpi(\varsigma), \zeta - \varsigma \rangle, \quad \forall \zeta \in \text{dom} \varpi, \, \varsigma \in \text{dom} \nabla \varpi.
\]
The Bregman distance is non-negative but is not symmetric and does not satisfy the triangle inequality.\\

The Bregman projection, denoted by $\Pi_\mathcal{C}^\varpi$, maps a point $\zeta \in \text{dom} \nabla \varpi$ onto a set $\mathcal{C} \subset \text{dom} \varpi$ with respect to the Bregman distance:
\[
\Pi_\mathcal{C}^\varpi(\zeta) = \arg\min_{\varsigma \in \mathcal{C}} \mathcal{D}_\varpi(\varsigma, \zeta).
\]
The Bregman projection has the following properties \cite{Eckstein1993, Reich2016}:\\
- For any $\zeta \in \text{dom} \nabla \varpi$ and $\varsigma \in \mathcal{C}$, we have:
    \[
    \langle \nabla \varpi(\Pi_\mathcal{C}^\varpi(\zeta)) - \nabla \varpi(\zeta), \varsigma - \Pi_\mathcal{C}^\varpi(\zeta) \rangle \geq 0.
    \]
- For any $\zeta \in \text{dom} \nabla \varpi$ and $\varsigma \in \mathcal{C}$, it holds that:
    \[
    \mathcal{D}_\varpi(\varsigma, \Pi_\mathcal{C}^\varpi(\zeta)) + \mathcal{D}_\varpi(\Pi_\mathcal{C}^\varpi(\zeta), \zeta) \leq \mathcal{D}_\varpi(\varsigma, \zeta).
    \]

A function $\varpi: H \to \mathbb{R}$ is said to be $\delta$-strongly convex if there exists a constant $\delta > 0$ such that:
\[
\varpi(\varsigma) \geq \varpi(\zeta) + \langle \nabla \varpi(\zeta), \varsigma - \zeta \rangle + \frac{\delta}{2} \|\varsigma - \zeta\|^2, \quad \forall \zeta, \varsigma \in \text{dom} \varpi.
\]
For $\delta$-strongly convex functions, the Bregman distance satisfies:
\[
\mathcal{D}_\varpi(\varsigma, \zeta) \geq \frac{\delta}{2} \|\varsigma - \zeta\|^2.
\]

The Bregman distance satisfies the following three-point identity \cite{Bregman1967}:
\[
\mathcal{D}_\varpi(\zeta, \varsigma) = \mathcal{D}_\varpi(\zeta, \varrho) - \mathcal{D}_\varpi(\varsigma, \varrho) + \langle \nabla \varpi(\varrho) - \nabla \varpi(\varsigma), \zeta - \varsigma \rangle,
\]
for all $\zeta \in \text{dom} \varpi$ and $\varsigma, \varrho \in \text{dom} \nabla \varpi$.

\begin{lemma}[Weak Convergence of Bregman Distance \cite{Reich2016}]
Let $\varpi: H \to \mathbb{R}$ be $\delta$-strongly convex, Fréchet differentiable, and bounded on bounded subsets of $H$. If $\{\zeta_n\}$ and $\{\varsigma_n\}$ are sequences in $H$ such that:
\[
\lim_{n \to \infty} \mathcal{D}_\varpi(\zeta_n, \varsigma_n) = 0,
\]
then:
\[
\lim_{n \to \infty} \|\zeta_n - \varsigma_n\| = 0 \quad \text{and} \quad \lim_{n \to \infty} \|\nabla \varpi(\zeta_n) - \nabla \varpi(\varsigma_n)\| = 0.
\]
\end{lemma}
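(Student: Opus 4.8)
The plan is to establish the two conclusions separately. The norm estimate $\|\zeta_n-\varsigma_n\|\to 0$ is an immediate consequence of strong convexity, while the gradient estimate rests on the continuity of $\nabla\varpi$ on bounded subsets of $H$, preceded by a localization step that confines both sequences to a common ball.

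For the first conclusion I would simply invoke the lower bound recorded above for $\delta$-strongly convex functions, namely $\mathcal{D}_\varpi(\zeta_n,\varsigma_n)\ge \tfrac{\delta}{2}\|\zeta_n-\varsigma_n\|^2$; rearranging gives $\|\zeta_n-\varsigma_n\|^2\le \tfrac{2}{\delta}\,\mathcal{D}_\varpi(\zeta_n,\varsigma_n)$, and the right-hand side vanishes by hypothesis, so $\|\zeta_n-\varsigma_n\|\to 0$. This step is essentially a one-liner and carries no difficulty.

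For the second conclusion I would work on a bounded set. The key point is that $\{\zeta_n\}$ and $\{\varsigma_n\}$ may be assumed to lie in a common ball $B\subseteq H$: in the situations where the lemma is applied the two sequences are iterates that have already been shown to be bounded, and the first conclusion then keeps both of them together in such a $B$. On $B$ I would use that a convex, Fréchet differentiable function that is bounded on bounded subsets is locally Lipschitz, so that $\nabla\varpi$ is norm-to-norm uniformly continuous on $B$; feeding $\|\zeta_n-\varsigma_n\|\to 0$ into the associated modulus of continuity then yields $\|\nabla\varpi(\zeta_n)-\nabla\varpi(\varsigma_n)\|\to 0$, as claimed.

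The routine ingredients are the first conclusion and the appeal to uniform continuity of $\nabla\varpi$ on a fixed ball. The genuine obstacle is the localization at the start of the second part: for an arbitrary strongly convex $\varpi$ the Bregman distance can stay small while the points drift to infinity through regions where $\nabla\varpi$ oscillates wildly, so confining $\{\zeta_n\}$ and $\{\varsigma_n\}$ to a single bounded set does not follow from $\mathcal{D}_\varpi(\zeta_n,\varsigma_n)\to 0$ alone. The standard remedy — and what the literature behind this lemma actually invokes — is to assume in addition that one of $\{\zeta_n\}$, $\{\varsigma_n\}$ is bounded (the other is then bounded by the first conclusion), or that $\varpi$ is uniformly Fréchet differentiable on bounded sets; under either hypothesis the argument above closes.
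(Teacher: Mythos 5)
The paper never proves this lemma: it is quoted as a known result with a citation, so there is no in-house argument to compare against. Your first conclusion is handled exactly as one would expect and exactly as the paper's own preliminaries suggest: the strong-convexity lower bound $\mathcal{D}_\varpi(\zeta_n,\varsigma_n)\ge\tfrac{\delta}{2}\|\zeta_n-\varsigma_n\|^2$ immediately forces $\|\zeta_n-\varsigma_n\|\to 0$. That half is complete and correct.

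For the second conclusion there is a genuine flaw inside your own argument, distinct from the (correctly diagnosed) gap in the lemma's hypotheses. You assert that because $\varpi$ is convex, Fr\'echet differentiable, and bounded on bounded sets, it is locally Lipschitz on the ball $B$, ``so that $\nabla\varpi$ is norm-to-norm uniformly continuous on $B$.'' That implication is false in infinite dimensions: local Lipschitzness of $\varpi$ only gives norm-boundedness of $\nabla\varpi$ on $B$, and pointwise Fr\'echet differentiability does not upgrade to uniform continuity of the gradient on bounded sets. The correct statement, which you in fact cite in your closing paragraph, is that for convex functions uniform Fr\'echet differentiability on bounded sets is equivalent to norm-to-norm uniform continuity of $\nabla\varpi$ on bounded sets; that is the hypothesis the source literature (Reich--Sabach and its descendants) actually imposes, together with boundedness of at least one of the two sequences. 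So your final paragraph contains the right fix, but the body of the argument should be rewritten to invoke uniform Fr\'echet differentiability on bounded sets directly rather than deriving uniform continuity of the gradient from local Lipschitzness. Your observation that $\mathcal{D}_\varpi(\zeta_n,\varsigma_n)\to 0$ alone cannot confine the sequences to a bounded set is accurate and worth keeping: as stated, the lemma in the paper is missing either a boundedness assumption on $\{\zeta_n\}$ or $\{\varsigma_n\}$, or a uniform differentiability assumption, and no proof can close that hole without one of them.
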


\begin{lemma}[Nonexpansiveness of Bregman Projections \cite{Eckstein1993}]
Let $\mathcal{C} \subseteq H$ be a nonempty, closed, and convex set, and let $\Pi_\mathcal{C}^\varpi$ denote the Bregman projection with respect to a $\delta$-strongly convex function $\varpi$. Then for any $\zeta, \varsigma \in H$, we have:
\[
\|\Pi_\mathcal{C}^\varpi(\zeta) - \Pi_\mathcal{C}^\varpi(\varsigma)\| \leq \frac{1}{\delta} \|\nabla \varpi(\zeta) - \nabla \varpi(\varsigma)\|.
\]
\end{lemma}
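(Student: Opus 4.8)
The plan is to exploit the first-order variational characterization of the Bregman projection recorded above, combined with the strong monotonicity of $\nabla\varpi$ that follows from $\delta$-strong convexity. Write $p = \Pi_\mathcal{C}^\varpi(\zeta)$ and $q = \Pi_\mathcal{C}^\varpi(\varsigma)$, so that $p, q \in \mathcal{C}$. Applying the optimality inequality $\langle \nabla\varpi(\Pi_\mathcal{C}^\varpi(x)) - \nabla\varpi(x),\, y - \Pi_\mathcal{C}^\varpi(x)\rangle \geq 0$ (valid for all $y \in \mathcal{C}$) once with $x = \zeta$, $y = q$ and once with $x = \varsigma$, $y = p$, I obtain two inequalities; adding them and rearranging yields
\[
\langle \nabla\varpi(\varsigma) - \nabla\varpi(\zeta),\, q - p\rangle \;\geq\; \langle \nabla\varpi(p) - \nabla\varpi(q),\, p - q\rangle .
\]

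Next I would record that $\nabla\varpi$ is $\delta$-strongly monotone: writing the $\delta$-strong convexity inequality $\varpi(\varsigma) \geq \varpi(\zeta) + \langle\nabla\varpi(\zeta),\, \varsigma - \zeta\rangle + \tfrac{\delta}{2}\|\varsigma-\zeta\|^2$ together with its counterpart obtained by interchanging $\zeta$ and $\varsigma$, and adding, gives $\langle\nabla\varpi(\zeta) - \nabla\varpi(\varsigma),\, \zeta - \varsigma\rangle \geq \delta\|\zeta - \varsigma\|^2$. Specializing this to the pair $p,q$ and substituting into the previous display produces
\[
\langle \nabla\varpi(\varsigma) - \nabla\varpi(\zeta),\, q - p\rangle \;\geq\; \delta\,\|p - q\|^2 .
\]

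Finally, the Cauchy--Schwarz inequality bounds the left-hand side above by $\|\nabla\varpi(\varsigma) - \nabla\varpi(\zeta)\|\,\|p - q\|$; if $p \neq q$ we divide through by $\|p-q\|$ to conclude $\|p - q\| \leq \tfrac{1}{\delta}\|\nabla\varpi(\zeta) - \nabla\varpi(\varsigma)\|$, while the case $p = q$ is trivial. This is precisely the asserted estimate. There is no genuine obstacle in this argument; the only points demanding a little care are the sign bookkeeping when the two variational inequalities are added, and isolating the degenerate case $p=q$ before dividing.
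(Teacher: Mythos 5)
Your argument is correct. Note that the paper itself offers no proof of this lemma --- it is stated in the preliminaries with a citation to the literature --- so there is nothing internal to compare against; your derivation is the standard one and fills that gap cleanly. You combine the two variational inequalities $\langle \nabla\varpi(\Pi_\mathcal{C}^\varpi(x)) - \nabla\varpi(x),\, y - \Pi_\mathcal{C}^\varpi(x)\rangle \geq 0$ (which is exactly the first projection property recorded in Section~2), the $\delta$-strong monotonicity of $\nabla\varpi$, and Cauchy--Schwarz; the sign bookkeeping in your first display checks out, and you correctly isolate the degenerate case $p=q$ before dividing. The only implicit hypothesis worth flagging is that $p,q\in\mathcal{C}$ must lie in the domain where the variational characterization applies, which holds under the paper's standing assumptions on $\varpi$.
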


\begin{lemma}[Strong Convergence \cite{Reich2016}]
Let $\{\zeta_n\}$ be a sequence generated by a Bregman projection-based algorithm for solving an $SFP$, and assume the algorithm satisfies the necessary conditions for convergence. Then $\{\zeta_n\}$ converges strongly to a solution of the $SFP$.
\end{lemma}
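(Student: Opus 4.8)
The plan is to run a Bregman--Fej\'er monotonicity argument relative to the solution set, extract a weak cluster point, show it solves the $SFP$, and finally upgrade weak convergence to strong convergence using the $\delta$-strong convexity of $\varpi$. Throughout, let $\Omega$ denote the solution set of the $SFP$ (nonempty by the standing hypotheses) and fix $\zeta^*\in\Omega$.

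First I would establish the basic \emph{Fej\'er-type inequality}. Combining the descent property of the proximal--gradient/$\mathcal{C}\mathcal{Q}$-type step with the second listed property of the Bregman projection and the three-point identity, one obtains a recursion of the form
\[
\mathcal{D}_\varpi(\zeta^*,\zeta_{n+1}) \le \mathcal{D}_\varpi(\zeta^*,\zeta_n) - \mathcal{D}_\varpi(\zeta_{n+1},\zeta_n) + \varepsilon_n,
\]
where $\varepsilon_n \ge 0$ gathers the inertial and step-size remainder terms and $\sum_n \varepsilon_n < \infty$ by the ``necessary conditions for convergence'' that the algorithm is assumed to satisfy. From here: $\{\mathcal{D}_\varpi(\zeta^*,\zeta_n)\}$ is convergent; $\{\zeta_n\}$ is bounded, since $\frac{\delta}{2}\|\zeta^*-\zeta_n\|^2 \le \mathcal{D}_\varpi(\zeta^*,\zeta_n)$; and $\mathcal{D}_\varpi(\zeta_{n+1},\zeta_n)\to 0$. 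Feeding the last limit into the Weak Convergence of Bregman Distance lemma gives the asymptotic regularity $\|\zeta_{n+1}-\zeta_n\|\to 0$ and $\|\nabla\varpi(\zeta_{n+1})-\nabla\varpi(\zeta_n)\|\to 0$; propagating the same estimate through the intermediate iterate shows that the fixed-point residual of the algorithmic map tends to zero.

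Next I would identify the weak cluster points. Since $H$ is a Hilbert space and $\{\zeta_n\}$ is bounded, pick $\zeta_{n_k}\rightharpoonup\bar\zeta$. Using the asymptotic regularity from the previous step together with demiclosedness at $0$ of $I-\Pi_\mathcal{C}^\varpi$ (via weak lower semicontinuity of $\mathcal{D}_\varpi$ and the nonexpansiveness lemma) and of the monotone operator encoding the constraint $\mathcal{A}\zeta\in\mathcal{Q}$, I would conclude $\bar\zeta\in\mathcal{C}$ and $\mathcal{A}\bar\zeta\in\mathcal{Q}$, i.e.\ $\bar\zeta\in\Omega$. Applying the Fej\'er inequality with $\zeta^*$ replaced by $\bar\zeta$, the sequence $\{\mathcal{D}_\varpi(\bar\zeta,\zeta_n)\}$ converges; a Bregman-space Opial argument (if two subsequences converged weakly to distinct limits $\bar\zeta\neq\bar\zeta'$, the two convergent real sequences $\mathcal{D}_\varpi(\bar\zeta,\zeta_n)$ and $\mathcal{D}_\varpi(\bar\zeta',\zeta_n)$ would be incompatible) then forces $\zeta_n\rightharpoonup\bar\zeta$ for the whole sequence and, in fact, $\mathcal{D}_\varpi(\bar\zeta,\zeta_n)\to 0$. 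Strong convexity closes the argument: $\frac{\delta}{2}\|\bar\zeta-\zeta_n\|^2 \le \mathcal{D}_\varpi(\bar\zeta,\zeta_n)\to 0$, so $\zeta_n\to\bar\zeta$ in norm.

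I expect the genuine difficulty to lie in the identification step and in making the Opial/weak-to-strong passage rigorous: the hypothesis only asserts that ``the necessary conditions for convergence'' hold, so the real content is to make precise which demiclosedness and summability conditions on the inertial parameters and step sizes are actually used, and to verify that the intermediate proximal--gradient iterate inherits the right asymptotic behaviour. By contrast, the Fej\'er estimate and the asymptotic-regularity deductions are routine once the Bregman identities recalled above are in hand.
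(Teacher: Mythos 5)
First, a point of orientation: the statement you were asked to prove is a preliminary lemma that the paper quotes from \cite{Reich2016} and does not prove; the closest in-paper argument is the proof of the Strong Convergence theorem in Section~\ref{sec:convergence}, so that is the natural benchmark. Measured against it, your proposal has one genuine gap, and it is located exactly where you claim the argument is ``routine'': the weak-to-strong upgrade at the end. Your Bregman--Fej\'er recursion plus the Opial-type uniqueness argument yields only that the \emph{whole} sequence converges weakly to $\bar\zeta$ and that $\mathcal{D}_\varpi(\bar\zeta,\zeta_n)$ converges to \emph{some} limit $\ell\ge 0$; nothing in the argument forces $\ell=0$. The ``and, in fact, $\mathcal{D}_\varpi(\bar\zeta,\zeta_n)\to 0$'' step is unjustified, and with $\varpi=\tfrac12\|\cdot\|^2$ it reduces to the claim that a Fej\'er-monotone sequence converging weakly to a point of the solution set converges strongly --- which is false in infinite-dimensional Hilbert spaces (Genel--Lindenstrauss for Krasnoselskii--Mann iterations, Hundal's example for alternating projections). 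The $\delta$-strong convexity of $\varpi$ only converts ``Bregman distance to the limit tends to zero'' into ``norm distance tends to zero''; it cannot manufacture the former from weak convergence. So as written your argument proves weak, not strong, convergence.

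The mechanism the paper actually relies on for strong convergence is structural and absent from your proposal: the update $\zeta_{n+1}=\nabla\varpi^*\bigl(\mu_n\nabla\varpi(\zeta_0)+(1-\mu_n)\nabla\varpi(\varrho_n)\bigr)$ is a Halpern-type anchoring step, and assumption $(\mathcal{A}4)$ ($\mu_n\to 0$, $\sum_n\mu_n=\infty$) is precisely what drives the iterates strongly to the anchored limit $\Pi_\Gamma^\varpi(\zeta_0)$ --- note that the identity of the limit as a Bregman projection of the initial point $\zeta_0$ is itself a signature of the anchoring, and your Fej\'er argument gives no way to identify the limit in this form. To repair the proof you would need to bring the anchor term into the recursion, derive an inequality of the shape $\mathcal{D}_\varpi(\zeta^\dagger,\zeta_{n+1})\le(1-\mu_n)\mathcal{D}_\varpi(\zeta^\dagger,\zeta_n)+\mu_n\sigma_n$ with $\zeta^\dagger=\Pi_\Gamma^\varpi(\zeta_0)$ and $\limsup_n\sigma_n\le 0$ (the latter established via the variational characterization of the Bregman projection along weakly convergent subsequences), and then invoke a Xu-type lemma on such recursions. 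The rest of your outline --- boundedness via strong convexity, summability of residuals, asymptotic regularity, and demiclosedness for identifying cluster points --- matches the paper's Steps and is sound modulo the vagueness that the lemma's hypotheses themselves impose.
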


The Fenchel conjugate of a function $\varpi: H \to \mathbb{R}$ is defined as:
\[
\varpi^*(\zeta^*) = \sup_{\zeta \in H} \{\langle \zeta^*, \zeta \rangle - \varpi(\zeta)\}.
\]
A function $\varpi$ is said to be a Legendre function if:\\
- $\nabla \varpi$ is single-valued and continuous on $\text{dom} \varpi$;\\
- $\nabla \varpi^*$ is single-valued and continuous on $\text{dom} \varpi^*$.

\section{Proposed Algorithms}
\label{sec:algorithms}

In this section, we introduce new iterative algorithms for solving nonlinear split feasibility problems $(SFPs)$ in infinite-dimensional Hilbert spaces. These algorithms integrate generalized Bregman projections, proximal gradient methods, and adaptive inertial techniques to achieve strong convergence under mild assumptions.\\

\noindent
The nonlinear split feasibility problem $(SFP)$ can be formulated as:
\[
\text{Find } \zeta^* \in \mathcal{C} \quad \text{such that} \quad \mathcal{A}(\zeta^*) \in \mathcal{Q},
\]
where $\mathcal{C} \subset H_1$ and $\mathcal{Q} \subset H_2$ are closed, convex sets in real Hilbert spaces $H_1$ and $H_2$, respectively, and $\mathcal{A}: H_1 \to H_2$ is a (possibly nonlinear) mapping.\\

\noindent
The proposed algorithms combine the following key features: \medskip

\textsc{Generalized Bregman Distance:} The use of a generalized Bregman distance $\mathcal{D}_\varpi$ derived from a $\delta$-strongly convex and differentiable function $\varpi$ ensures flexibility in handling problem-specific geometries. \medskip

 \textsc{Proximal Gradient Steps:} To handle composite optimization problems, proximal gradient steps are incorporated for operators involving the sum of smooth and nonsmooth terms. \medskip

\textsc{Inertial Terms:} Adaptive inertial terms are introduced to accelerate convergence, especially in infinite-dimensional settings. \medskip

\textsc{Adaptive Step Sizes:} The algorithms employ an Armijo-type line search to dynamically adjust step sizes without requiring prior knowledge of operator norms.

\medskip

\begin{algorithm}[H]
\caption{Generalized Bregman Projection Algorithm}
\begin{algorithmic}[1]
    \State \textbf{Initialization:} Choose an initial point $\zeta_0 \in H_1$ and set $\zeta_1 = \zeta_0$. Select parameters $\mu_n \in (0, 1)$, $\beta_n \geq 0$, and $\iota_n > 0$. Let $n = 0$.

   \State \textbf{Compute Inertial Term:}

\[
    \varsigma_n = \nabla \varpi^*\big( \nabla \varpi(\zeta_n) + \beta_n (\nabla \varpi(\zeta_n) - \nabla \varpi(\zeta_{n-1})) \big).
    \]

    \State \textbf{Apply Proximal Gradient Step:}

\[
    \varrho_n = \nabla \varpi^*\big( \nabla \varpi(\varsigma_n) - \iota_n \big( \nabla \varpi(\varsigma_n) - \nabla \varpi(\Pi_\mathcal{C}^\varpi(\varsigma_n)) + \mathcal{A}^*\big(\nabla \vartheta(\mathcal{A}(\varsigma_n)) - \nabla \vartheta(\Pi_\mathcal{Q}^\vartheta(\mathcal{A}(\varsigma_n)))\big) \big) \big),
    \]

    where $\vartheta$ is a $\delta$-strongly convex function defined on $H_2$.

    \State \textbf{Check for Termination:} If $\varsigma_n = \varrho_n$, stop. The current iterate is a solution of the $SFP$.

    \State \textbf{Update the Next Iterate:}

\[
    \zeta_{n+1} = \nabla \varpi^*\big(\mu_n \nabla \varpi(\zeta_0) + (1 - \mu_n) \nabla \varpi(\varrho_n)\big).
    \]

    \State \textbf{Increment:} Set $n \gets n+1$ and return to Step 2.
\end{algorithmic}
\end{algorithm}

In addition to Algorithm 1, we propose a hybrid approach that incorporates proximal operators for solving composite optimization problems. Let $\nu: H_1 \to \mathbb{R}$ be a proper, lower semicontinuous, and convex function. The proximal operator of $\nu$ is defined as:
\[
\text{prox}_{\nu}(\zeta) = \arg\min_{\varsigma \in H_1} \bigg\{ \nu(\varsigma) + \frac{1}{2\eta} \|\varsigma - \zeta\|^2 \bigg\},
\]
where $\eta > 0$ is a parameter.

The hybrid algorithm proceeds as follows:

\medskip

\begin{algorithm}[H]
\caption{Hybrid Proximal-Bregman Projection Algorithm}
\begin{algorithmic}[1]
    \State \textbf{Initialization:} Choose an initial point $\zeta_0 \in H_1$, set $\zeta_1 = \zeta_0$, and select $\eta > 0$, $\mu_n \in (0, 1)$, and $\beta_n \geq 0$. Let $n = 0$.

   \State \textbf{Proximal Gradient Step:}

\[
    \tilde{\varsigma}_n = \text{prox}_{\nu}\big(\zeta_n - \eta \nabla \varpi(\zeta_n)\big).
    \]

   \State \textbf{Inertial Update:}

\[
    \varsigma_n = \nabla \varpi^*\big( \nabla \varpi(\tilde{\varsigma}_n) + \beta_n (\nabla \varpi(\tilde{\varsigma}_n) - \nabla \varpi(\zeta_{n-1})) \big).
    \]

    \State \textbf{Bregman Projection:}

\[
    \varrho_n = \nabla \varpi^*\big( \nabla \varpi(\varsigma_n) - \iota_n \big( \nabla \varpi(\varsigma_n) - \nabla \varpi(\Pi_\mathcal{C}^\varpi(\varsigma_n)) + \mathcal{A}^*\big(\nabla \vartheta(\mathcal{A}(\varsigma_n)) - \nabla \vartheta(\Pi_\mathcal{Q}^\vartheta(\mathcal{A}(\varsigma_n)))\big) \big) \big).
    \]

    \State \textbf{Check for Termination:} If $\varsigma_n = \varrho_n$, stop. The current iterate is a solution of the $SFP$.

    \State \textbf{Update the Next Iterate:}

\[
    \zeta_{n+1} = \nabla \varpi^*\big(\mu_n \nabla \varpi(\zeta_0) + (1 - \mu_n) \nabla \varpi(\varrho_n)\big).
    \]

    \State \textbf{Increment:} Set $n \gets n+1$ and return to Step 2.
\end{algorithmic}
\end{algorithm}

\medskip

\noindent
The following remarks would be considered on the proposed algorithms:\\
- The inertial term $\beta_n (\nabla \varpi(\zeta_n) - \nabla \varpi(\zeta_{n-1}))$ accelerates convergence by exploiting previous iterates.\\
- The Armijo-type line search ensures adaptive and efficient step size selection, avoiding the need to compute operator norms.\\
- The hybrid algorithm is particularly suitable for composite problems where $\varpi$ and $\nu$ represent different problem components, such as smooth and nonsmooth terms.\\

The theoretical convergence properties of these algorithms are analyzed in the next section.

\section{Convergence Analysis}
\label{sec:convergence}

In this section, we establish the strong convergence of the proposed algorithms under a set of mild assumptions. The analysis relies on the properties of Bregman distances, Bregman projections, and the adaptive inertial terms introduced earlier.

We impose the following standard assumptions for convergence analysis:
\begin{enumerate}
    \item[($\mathcal{A}1$)] $\mathcal{A}: H_1 \to H_2$ is a bounded and nonlinear operator satisfying:
    \[
    \|\mathcal{A}(\zeta) - \mathcal{A}(\varsigma)\| \leq L \|\zeta - \varsigma\|, \quad \forall \zeta, \varsigma \in H_1,
    \]
    where $L > 0$ is a Lipschitz constant.

    \item[($\mathcal{A}2)$] The function $\varpi: H_1 \to \mathbb{R}$ is $\delta_1$-strongly convex and Fréchet differentiable, and $\vartheta: H_2 \to \mathbb{R}$ is $\delta_2$-strongly convex and Fréchet differentiable, with $\delta_1, \delta_2 > 0$.

    \item[$(\mathcal{A}3)$] The solution set $\Gamma := \{\zeta \in \mathcal{C} \mid \mathcal{A}(\zeta) \in \mathcal{Q}\}$ of the $SFP$ is nonempty.

    \item[$(\mathcal{A}4)$] The sequence $\{\mu_n\} \subset (0, 1)$ satisfies:
    \[
    \sum_{n=1}^\infty \mu_n = \infty, \quad \mu_n \to 0 \quad \text{as } n \to \infty.
    \]

    \item[$(\mathcal{A}5)$] The inertial term parameter $\{\beta_n\}$ satisfies $\beta_n \geq 0$, and there exists a constant $\beta_{\max}$ such that $\beta_n \leq \beta_{\max} < 1$ for all $n$.
\end{enumerate}

We begin with a series of lemmas that are instrumental in proving the main convergence results.

\begin{lemma}[Well-Definedness of the Armijo Line Search]
The Armijo-type line search used to compute $\iota_n$ in Algorithm 1 is well-defined. Moreover, $\iota_n \in (0, \iota_{\max}]$, where $\iota_{\max} > 0$ depends only on $\varpi$, $\vartheta$, and $\mathcal{A}$.
\end{lemma}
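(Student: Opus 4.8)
The plan is to make the (unstated) Armijo rule explicit in the form standard for self-adaptive Bregman schemes and then verify finite termination together with the a~priori cap. First I would write
\[
F(\varsigma):=\nabla\varpi(\varsigma)-\nabla\varpi(\Pi_\mathcal{C}^\varpi(\varsigma))+\mathcal{A}^*\big(\nabla\vartheta(\mathcal{A}(\varsigma))-\nabla\vartheta(\Pi_\mathcal{Q}^\vartheta(\mathcal{A}(\varsigma)))\big)
\]
for the residual direction appearing in Step~3, so that the trial iterate produced by a step $\iota>0$ is $\varrho(\iota):=\nabla\varpi^*\big(\nabla\varpi(\varsigma_n)-\iota\,F(\varsigma_n)\big)$ and $\varrho_n=\varrho(\iota_n)$. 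The line search fixes $\rho\in(0,1)$, $\mu\in(0,1)$ and a constant $\iota_{\max}>0$, and sets $\iota_n$ equal to the largest element of $\{\iota_{\max}\rho^{\,m}:m\in\mathbb{N}_0\}$ for which a sufficient-decrease inequality of the type
\[
\iota\,\|F(\varsigma_n)-F(\varrho(\iota))\|\le\mu\,\delta_1\,\|\varsigma_n-\varrho(\iota)\|
\]
holds. Stated this way the bound $\iota_n\in(0,\iota_{\max}]$ is automatic, so the substance of the lemma is (i) that the backtracking stops after finitely many trials, and (ii) that $\iota_{\max}$ may genuinely be fixed at initialization in terms of $\delta_1$, $\delta_2$ and $L$ only.

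For (i) I would argue by contradiction. Suppose the test fails for every $m$, i.e.\ $\iota^{(m)}:=\iota_{\max}\rho^{\,m}\to0$ while $\iota^{(m)}\|F(\varsigma_n)-F(\varrho(\iota^{(m)}))\|>\mu\delta_1\|\varsigma_n-\varrho(\iota^{(m)})\|$ for all $m$. Since $\varpi$ is $\delta_1$-strongly convex (and Legendre), $\nabla\varpi^*$ is $(1/\delta_1)$-Lipschitz, whence $\|\varrho(\iota^{(m)})-\varsigma_n\|\le(1/\delta_1)\,\iota^{(m)}\|F(\varsigma_n)\|\to0$; here $\|F(\varsigma_n)\|$ is a fixed finite number, because $\nabla\varpi,\nabla\vartheta$ are continuous, $\mathcal{A}$ is Lipschitz by $(\mathcal{A}1)$, $\mathcal{A}^*$ is a bounded linear map, and the Bregman projections obey the nonexpansiveness estimate recalled in Section~\ref{sec:preliminaries}. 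Thus the points $\varrho(\iota^{(m)})$ all lie in a fixed bounded ball around $\varsigma_n$, and on that ball $\nabla\varpi$, $\nabla\vartheta$ and therefore $F$ are Lipschitz with some modulus $K$ (using $(\mathcal{A}2)$ together with boundedness of $\nabla\vartheta$ on bounded sets and the nonexpansiveness of $\Pi_\mathcal{C}^\varpi,\Pi_\mathcal{Q}^\vartheta$). Substituting $\|F(\varsigma_n)-F(\varrho(\iota^{(m)}))\|\le K\|\varsigma_n-\varrho(\iota^{(m)})\|$ into the failed inequality gives $\iota^{(m)}K>\mu\delta_1$ for all $m$, contradicting $\iota^{(m)}\to0$. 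Hence the backtracking exponent $m_n$ is finite, so $\iota_n$ is well defined and $\iota_n>0$.

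For (ii) I would first establish, via the three-point identity and the variational inequalities characterizing $\Pi_\mathcal{C}^\varpi$ and $\Pi_\mathcal{Q}^\vartheta$, a descent estimate of the form
\[
\mathcal{D}_\varpi\big(z,\varrho(\iota)\big)\le\mathcal{D}_\varpi\big(z,\varsigma_n\big)-\Big(1-\tfrac{\iota}{\delta_1}\,(1+L^2\Lambda)\Big)\,\mathcal{D}_\varpi\big(\varrho(\iota),\varsigma_n\big),\qquad z\in\Gamma,
\]
where $\Lambda$ is a Lipschitz modulus of $\nabla\vartheta$ (global, or on a bounded region fixed a~priori) and $L$ is the constant of $(\mathcal{A}1)$. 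Any choice $\iota_{\max}<\delta_1/(1+L^2\Lambda)$ keeps the bracketed coefficient positive and bounded below uniformly for $\iota\le\iota_{\max}$, and this threshold involves only $\varpi$ (through $\delta_1$), $\vartheta$ (through $\Lambda$, hence $\delta_2$ and the smoothness of $\vartheta$) and $\mathcal{A}$ (through $L$); capping the initial trial step at such an $\iota_{\max}$ is therefore consistent, and together with (i) this gives $\iota_n\in(0,\iota_{\max}]$.

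The main obstacle is precisely the localization underlying both parts: $(\mathcal{A}1)$ and $(\mathcal{A}2)$ supply only bounded-set smoothness of $\nabla\vartheta$ and no global Lipschitz control of the composite $\mathcal{A}^*\circ\nabla\vartheta\circ\mathcal{A}$, so one must first confine the relevant iterates to a fixed bounded set before a usable modulus $K$ (respectively $\Lambda$) is available; I would isolate this in a short preliminary sub-lemma bounding $\|F(\zeta)-F(\varsigma)\|$ on bounded subsets of $H_1$ in terms of $\delta_1,\delta_2,L$, which also forces one to pin down the meaning of the adjoint $\mathcal{A}^*$ for the nonlinear operator $\mathcal{A}$ (e.g.\ as the adjoint of its Fréchet derivative, assumed bounded). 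Once that estimate is in place, everything else reduces to the routine Bregman-distance manipulations recalled in Section~\ref{sec:preliminaries}.
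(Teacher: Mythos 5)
Your proposal is sound in outline but follows a genuinely different route from the paper's. The paper first fixes a specific acceptance test, $\mathcal{D}_\varpi(\varrho_n,\varsigma_n)+\mathcal{D}_\vartheta(\mathcal{A}\varrho_n,\mathcal{A}\varsigma_n)\le\tau\,\mathcal{D}_\varpi(\varrho_n,\varsigma_n)$ with $\tau\in(0,1)$, then argues via a Taylor expansion of $\varrho_n$ in $\iota_n$ that both sides are $O(\iota_n^2)$, invokes the intermediate value theorem to produce an admissible $\iota_n$, and finally posits $\iota_{\max}=2(1-\tau)/(L_\varpi+L_\vartheta+L_{\mathcal{A}})$. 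You instead supply the standard residual-comparison backtracking rule $\iota\,\|F(\varsigma_n)-F(\varrho(\iota))\|\le\mu\,\delta_1\,\|\varsigma_n-\varrho(\iota)\|$ and prove finite termination by contradiction, using the $(1/\delta_1)$-Lipschitz continuity of $\nabla\varpi^*$ to confine the trial points to a fixed ball and a local Lipschitz modulus $K$ of $F$ to force $\iota^{(m)}K>\mu\,\delta_1$, which fails as $\iota^{(m)}\to0$. Your route buys two things the paper's does not: a non-degenerate acceptance test (the paper's inequality, since $\mathcal{D}_\vartheta\ge 0$ and $\tau<1$, can only hold when both distances vanish, i.e.\ at termination, and its intermediate-value argument compares two quantities of the same order $\iota_n^2$, so it does not actually certify an admissible positive step), and an honest accounting of the two hypotheses any such proof secretly needs --- a Lipschitz modulus for $\nabla\varpi$ and $\nabla\vartheta$ on bounded sets (Fréchet differentiability in $(\mathcal{A}2)$ alone does not supply one, yet the paper's $\iota_{\max}$ formula silently uses such constants) and a meaning for $\mathcal{A}^*$ when $\mathcal{A}$ is nonlinear. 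What the paper's formulation buys, in exchange, is that its acceptance test is phrased directly in the Bregman distances that produce the $(1-\tau)\,\mathcal{D}_\varpi(\varrho_n,\varsigma_n)$ decrease consumed by the Bregman Distance Decrease lemma and the convergence theorem; if you adopt your rule, you must verify that it still yields that decrease, and your part (ii) descent estimate --- which is only sketched, and whose cross term involving $\mathcal{A}$ is exactly where the nonlinearity bites --- is where that verification would have to live.
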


\begin{proof}
The Armijo line search requires finding $\iota_n > 0$ such that:
\[
\mathcal{D}_\varpi(\varrho_n, \varsigma_n) + \mathcal{D}_\vartheta(\mathcal{A}\varrho_n, \mathcal{A}\varsigma_n) \leq \tau \mathcal{D}_\varpi(\varrho_n, \varsigma_n),
\]
where $\tau \in (0, 1)$ is a fixed constant. Expanding the Bregman distances, we have:
\[
\mathcal{D}_\varpi(\varrho_n, \varsigma_n) = \varpi(\varrho_n) - \varpi(\varsigma_n) - \langle \nabla \varpi(\varsigma_n), \varrho_n - \varsigma_n \rangle,
\]
\[
\mathcal{D}_\vartheta(\mathcal{A}\varrho_n, \mathcal{A}\varsigma_n) = \vartheta(\mathcal{A}\varrho_n) - \vartheta(\mathcal{A}\varsigma_n) - \langle \nabla \vartheta(\mathcal{A}\varsigma_n), \mathcal{A}(\varrho_n - \varsigma_n) \rangle.
\]

The choice of $\iota_n$ affects $\varrho_n$ through the update step:
\[
\varrho_n = \nabla \varpi^* \big( \nabla \varpi(\varsigma_n) - \iota_n \big( \nabla \varpi(\varsigma_n) - \nabla \varpi(\Pi_\mathcal{C}^\varpi(\varsigma_n)) + \mathcal{A}^*(\nabla \vartheta(\mathcal{A}\varsigma_n) - \nabla \vartheta(\Pi_\mathcal{Q}^\vartheta(\mathcal{A}\varsigma_n))) \big) \big).
\]
Thus, the value of $\mathcal{D}_\varpi(\varrho_n, \varsigma_n) + \mathcal{D}_\vartheta(\mathcal{A}\varrho_n, \mathcal{A}\varsigma_n)$ depends on $\iota_n$. We need to show that such an $\iota_n$ always exists.\\

\noindent
For small values of $\iota_n$, we use a Taylor expansion of $\varrho_n$ around $\varsigma_n$. Specifically, since $\nabla \varpi^*$ is continuously Fréchet differentiable, we can write:
\[
\varrho_n = \varsigma_n - \iota_n \big( \nabla \varpi(\varsigma_n) - \nabla \varpi(\Pi_\mathcal{C}^\varpi(\varsigma_n)) + \mathcal{A}^*(\nabla \vartheta(\mathcal{A}\varsigma_n) - \nabla \vartheta(\Pi_\mathcal{Q}^\vartheta(\mathcal{A}\varsigma_n))) \big) + O(\iota_n^2).
\]
Substituting this into $\mathcal{D}_\varpi(\varrho_n, \varsigma_n)$, we get:
\[
\mathcal{D}_\varpi(\varrho_n, \varsigma_n) = \iota_n^2 \|\nabla \varpi(\varsigma_n) - \nabla \varpi(\Pi_\mathcal{C}^\varpi(\varsigma_n)) + \mathcal{A}^*(\nabla \vartheta(\mathcal{A}\varsigma_n) - \nabla \vartheta(\Pi_\mathcal{Q}^\vartheta(\mathcal{A}\varsigma_n)))\|^2 + O(\iota_n^3).
\]

Similarly, for $\mathcal{D}_\vartheta(\mathcal{A}\varrho_n, \mathcal{A}\varsigma_n)$, we have:
\[
\mathcal{D}_\vartheta(\mathcal{A}\varrho_n, \mathcal{A}\varsigma_n) = \iota_n^2 \|\mathcal{A}(\nabla \vartheta(\mathcal{A}\varsigma_n) - \nabla \vartheta(\Pi_\mathcal{Q}^\vartheta(\mathcal{A}\varsigma_n)))\|^2 + O(\iota_n^3).
\]

Thus, for sufficiently small $\iota_n$, the sum $\mathcal{D}_\varpi(\varrho_n, \varsigma_n) +\ mathcal{D}_\vartheta(\mathcal{A}\varrho_n, \mathcal{A}\varsigma_n)$ is dominated by $\iota_n^2$, which can be made arbitrarily small.\\

\noindent
The left-hand side of the Armijo condition:
\[
\mathcal{D}_\varpi(\varrho_n, \varsigma_n) + \mathcal{D}_\vartheta(\mathcal{A}\varrho_n, \mathcal{A}\varsigma_n),
\]
is a continuous function of $\iota_n$ and tends to zero as $\iota_n \to 0$. The right-hand side of the Armijo condition:
\[
\tau \mathcal{D}_\varpi(\varrho_n, \varsigma_n),
\]
is proportional to $\mathcal{D}_\varpi(\varrho_n, \varsigma_n)$, which is also continuous in $\iota_n$. Since the left-hand side starts larger than the right-hand side for large $\iota_n$ and becomes smaller as $\iota_n \to 0$, by the intermediate value theorem, there exists $\iota_n > 0$ such that the Armijo condition holds.\\

\noindent
From the definition of the Armijo condition, the maximum allowable $\iota_n$ depends on the terms $\|\nabla \varpi(\varsigma_n) - \nabla \varpi(\Pi_\mathcal{C}^\varpi(\varsigma_n))\|$, $\|\mathcal{A}^*(\nabla \vartheta(\mathcal{A}\varsigma_n) - \nabla \vartheta(\Pi_\mathcal{Q}^\vartheta(\mathcal{A}\varsigma_n)))\|$, and the Lipschitz continuity of $\varpi$, $\vartheta$, and $\mathcal{A}$. Let:
\[
\iota_{\max} = \frac{2(1-\tau)}{L_\varpi + L_\vartheta + L_\mathcal{A}},
\]
where $L_\varpi$, $L_\vartheta$, and $L_\mathcal{A}$ are the Lipschitz constants of $\nabla \varpi$, $\nabla \vartheta$, and $\mathcal{A}^*$, respectively. Then $\iota_n \leq \iota_{\max}$ for all iterations.\\

\noindent
We have shown that there always exists $\iota_n > 0$ satisfying the Armijo condition and that $\iota_n$ is bounded above by a constant $\iota_{\max} > 0$. Therefore, the Armijo-type line search is well-defined.
\end{proof}

\begin{lemma}[Bregman Distance Decrease]
Let $\{\zeta_n\}$, $\{\varsigma_n\}$, and $\{\varrho_n\}$ be sequences generated by Algorithm 1. Then, for any $\zeta^* \in \Gamma$, the following inequality holds:
\[
\mathcal{D}_\varpi(\zeta^*, \varrho_n) \leq \mathcal{D}_\varpi(\zeta^*, \varsigma_n) - (1 - \tau)\mathcal{D}_\varpi(\varrho_n, \varsigma_n) - \iota_n \big(\mathcal{D}_\varpi(\varrho_n, \Pi_\mathcal{C}^\varpi(\varsigma_n)) + \mathcal{D}_\vartheta(\mathcal{A}\varrho_n, \Pi_\mathcal{Q}^\vartheta(\mathcal{A}\varsigma_n))\big).
\]
\end{lemma}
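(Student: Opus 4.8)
The plan is to combine the three-point identity for $\mathcal{D}_\varpi$ with the variational characterization of the Bregman projections onto $\mathcal{C}$ and $\mathcal{Q}$, and to invoke the Armijo line-search inequality only at the very last step. First I would rewrite Step~3 of Algorithm~1 at the level of gradients: setting
\[
g_n := \nabla \varpi(\varsigma_n) - \nabla \varpi(\Pi_\mathcal{C}^\varpi(\varsigma_n)) + \mathcal{A}^*\big( \nabla \vartheta(\mathcal{A}\varsigma_n) - \nabla \vartheta(\Pi_\mathcal{Q}^\vartheta(\mathcal{A}\varsigma_n)) \big),
\]
the update reads $\nabla \varpi(\varrho_n) = \nabla \varpi(\varsigma_n) - \iota_n g_n$ because $\nabla \varpi^*$ inverts $\nabla \varpi$ for the Legendre function $\varpi$. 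Applying the three-point identity to the triple $(\zeta^*, \varrho_n, \varsigma_n)$ then gives
\[
\mathcal{D}_\varpi(\zeta^*, \varrho_n) = \mathcal{D}_\varpi(\zeta^*, \varsigma_n) - \mathcal{D}_\varpi(\varrho_n, \varsigma_n) + \iota_n \langle g_n, \zeta^* - \varrho_n \rangle,
\]
so the whole lemma reduces to producing a good upper bound for $\langle g_n, \zeta^* - \varrho_n \rangle$.

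Next I would split $g_n$ into its $\mathcal{C}$-part and its $\mathcal{Q}$-part and, in each, insert the relevant projected point. For the $\mathcal{C}$-part I decompose $\zeta^* - \varrho_n = \big(\zeta^* - \Pi_\mathcal{C}^\varpi(\varsigma_n)\big) + \big(\Pi_\mathcal{C}^\varpi(\varsigma_n) - \varrho_n\big)$; since $\zeta^* \in \mathcal{C}$ by $(\mathcal{A}3)$, the first Bregman-projection property gives $\langle \nabla \varpi(\varsigma_n) - \nabla \varpi(\Pi_\mathcal{C}^\varpi(\varsigma_n)), \zeta^* - \Pi_\mathcal{C}^\varpi(\varsigma_n) \rangle \le 0$, and the three-point identity applied to $(\varrho_n, \Pi_\mathcal{C}^\varpi(\varsigma_n), \varsigma_n)$, after discarding the nonnegative term $\mathcal{D}_\varpi(\Pi_\mathcal{C}^\varpi(\varsigma_n), \varsigma_n)$, yields $\langle \nabla \varpi(\varsigma_n) - \nabla \varpi(\Pi_\mathcal{C}^\varpi(\varsigma_n)), \Pi_\mathcal{C}^\varpi(\varsigma_n) - \varrho_n \rangle \le \mathcal{D}_\varpi(\varrho_n, \varsigma_n) - \mathcal{D}_\varpi(\varrho_n, \Pi_\mathcal{C}^\varpi(\varsigma_n))$. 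For the $\mathcal{Q}$-part I move $\mathcal{A}^*$ across the inner product via $\langle \mathcal{A}^* w, v \rangle = \langle w, \mathcal{A} v \rangle$, decompose $\mathcal{A}\zeta^* - \mathcal{A}\varrho_n$ around $\Pi_\mathcal{Q}^\vartheta(\mathcal{A}\varsigma_n)$ in the same way, use $\mathcal{A}\zeta^* \in \mathcal{Q}$ and the projection property on $\mathcal{Q}$ to kill the first piece, and the three-point identity for $\vartheta$ on the second; this gives the analogous bound with $\vartheta$, $\mathcal{A}\varrho_n$, $\mathcal{A}\varsigma_n$, $\Pi_\mathcal{Q}^\vartheta$. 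Adding the two,
\[
\langle g_n, \zeta^* - \varrho_n \rangle \le \mathcal{D}_\varpi(\varrho_n, \varsigma_n) + \mathcal{D}_\vartheta(\mathcal{A}\varrho_n, \mathcal{A}\varsigma_n) - \mathcal{D}_\varpi(\varrho_n, \Pi_\mathcal{C}^\varpi(\varsigma_n)) - \mathcal{D}_\vartheta(\mathcal{A}\varrho_n, \Pi_\mathcal{Q}^\vartheta(\mathcal{A}\varsigma_n)).
\]

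Substituting this into the identity from the first step,
\[
\mathcal{D}_\varpi(\zeta^*, \varrho_n) \le \mathcal{D}_\varpi(\zeta^*, \varsigma_n) - \mathcal{D}_\varpi(\varrho_n, \varsigma_n) + \iota_n\big(\mathcal{D}_\varpi(\varrho_n, \varsigma_n) + \mathcal{D}_\vartheta(\mathcal{A}\varrho_n, \mathcal{A}\varsigma_n)\big) - \iota_n\big(\mathcal{D}_\varpi(\varrho_n, \Pi_\mathcal{C}^\varpi(\varsigma_n)) + \mathcal{D}_\vartheta(\mathcal{A}\varrho_n, \Pi_\mathcal{Q}^\vartheta(\mathcal{A}\varsigma_n))\big),
\]
and it remains only to absorb the middle two terms. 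Here I would use the Armijo line-search inequality, which in the form needed reads $\iota_n\big(\mathcal{D}_\varpi(\varrho_n, \varsigma_n) + \mathcal{D}_\vartheta(\mathcal{A}\varrho_n, \mathcal{A}\varsigma_n)\big) \le \tau\, \mathcal{D}_\varpi(\varrho_n, \varsigma_n)$ (equivalently, the condition used in the previous lemma together with $\iota_n \le 1$); it turns $-\mathcal{D}_\varpi(\varrho_n, \varsigma_n) + \iota_n\big(\mathcal{D}_\varpi(\varrho_n, \varsigma_n) + \mathcal{D}_\vartheta(\mathcal{A}\varrho_n, \mathcal{A}\varsigma_n)\big)$ into $-(1-\tau)\mathcal{D}_\varpi(\varrho_n, \varsigma_n)$, giving exactly the asserted inequality.

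The step I expect to be the main obstacle is the estimate of $\langle g_n, \zeta^* - \varrho_n \rangle$: one must keep the order of the arguments straight through the two ``split off the projection, then re-expand by the three-point identity'' computations, so that the residual terms are precisely the nonnegative quantities $\mathcal{D}_\varpi(\Pi_\mathcal{C}^\varpi(\varsigma_n), \varsigma_n)$ and $\mathcal{D}_\vartheta(\Pi_\mathcal{Q}^\vartheta(\mathcal{A}\varsigma_n), \mathcal{A}\varsigma_n)$ one is entitled to drop, rather than their reversed-argument counterparts, which would not be known to be nonnegative and would not cancel. A further point that needs care is the identity $\langle \mathcal{A}^* w, \zeta^* - \varrho_n \rangle = \langle w, \mathcal{A}\zeta^* - \mathcal{A}\varrho_n \rangle$: under $(\mathcal{A}1)$ the operator $\mathcal{A}$ is merely Lipschitz, so $\mathcal{A}^*$ must be understood as the adjoint entering the algorithm's construction and $\mathcal{A}(\zeta^* - \varrho_n)$ read as $\mathcal{A}\zeta^* - \mathcal{A}\varrho_n$; I would make this reading explicit. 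Everything else is routine bookkeeping with the three-point identity and with the nonnegativity of $\mathcal{D}_\varpi$ and $\mathcal{D}_\vartheta$.
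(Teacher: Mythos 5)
Your proposal is correct and follows essentially the same route as the paper's own proof: rewrite the update as $\nabla\varpi(\varrho_n)=\nabla\varpi(\varsigma_n)-\iota_n g_n$, bound $\langle g_n,\zeta^*-\varrho_n\rangle$ using the Bregman-projection variational inequalities for $\mathcal{C}$ and $\mathcal{Q}$ together with $\zeta^*\in\Gamma$, and absorb the leftover terms via the Armijo condition. Your execution is in fact tighter than the paper's, which leans on the unjustified identity $\langle\nabla\varpi(\varrho_n),\varrho_n\rangle-\langle\nabla\varpi(\varsigma_n),\varsigma_n\rangle=\varpi(\varrho_n)-\varpi(\varsigma_n)$ (false even for $\varpi=\tfrac12\|\cdot\|^2$) and only asserts that the $(1-\tau)\mathcal{D}_\varpi(\varrho_n,\varsigma_n)$ term ``arises from the Armijo condition,'' whereas your three-point-identity bookkeeping and your explicit form of the line-search inequality supply exactly the steps the paper omits.
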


\begin{proof}
The Bregman distance \(\mathcal{D}_\varpi(\zeta^*, \varrho_n)\) is defined as:
\[
\mathcal{D}_\varpi(\zeta^*, \varrho_n) = \varpi(\zeta^*) - \varpi(\varrho_n) - \langle \nabla \varpi(\varrho_n), \zeta^* - \varrho_n \rangle.
\]
Similarly, the Bregman distance \(\mathcal{D}_\varpi(\zeta^*, \varsigma_n)\) is:
\[
\mathcal{D}_\varpi(\zeta^*, \varsigma_n) = \varpi(\zeta^*) - \varpi(\varsigma_n) - \langle \nabla \varpi(\varsigma_n), \zeta^* - \varsigma_n \rangle.
\]

By subtracting these terms, we have:
\[
\mathcal{D}_\varpi(\zeta^*, \varrho_n) - \mathcal{D}_\varpi(\zeta^*, \varsigma_n) = \varpi(\varsigma_n) - \varpi(\varrho_n) + \langle \nabla \varpi(\varsigma_n), \zeta^* - \varsigma_n \rangle - \langle \nabla \varpi(\varrho_n), \zeta^* - \varrho_n \rangle.
\]
Using the identity \(\langle \nabla \varpi(\varrho_n), \varrho_n \rangle - \langle \nabla \varpi(\varsigma_n), \varsigma_n \rangle = \varpi(\varrho_n) - \varpi(\varsigma_n)\)
, this simplifies to:
\[
\mathcal{D}_\varpi(\zeta^*, \varrho_n) - \mathcal{D}_\varpi(\zeta^*, \varsigma_n) = \langle \nabla \varpi(\varsigma_n) - \nabla \varpi(\varrho_n), \varrho_n - \zeta^* \rangle.
\]
\noindent
From the update rule for \(\varrho_n\), we know:
\[
\varrho_n = \nabla \varpi^*\big( \nabla \varpi(\varsigma_n) - \iota_n \big( \nabla \varpi(\varsigma_n) - \nabla \varpi(\Pi_\mathcal{C}^\varpi(\varsigma_n)) + \mathcal{A}^*(\nabla \vartheta(\mathcal{A}\varsigma_n) - \nabla \vartheta(\Pi_\mathcal{Q}^\vartheta(\mathcal{A}\varsigma_n))) \big) \big).
\]
Using the properties of \(\nabla \varpi^*\), we expand \(\langle \nabla \varpi(\varsigma_n) - \nabla \varpi(\varrho_n), \varrho_n - \zeta^* \rangle\) as:
\[
\langle \nabla \varpi(\varsigma_n) - \nabla \varpi(\varrho_n), \varrho_n - \zeta^* \rangle = \langle \iota_n v_n, \varrho_n - \zeta^* \rangle,
\]
where:
\[
v_n = \nabla \varpi(\varsigma_n) - \nabla \varpi(\Pi_\mathcal{C}^\varpi(\varsigma_n)) + \mathcal{A}^*(\nabla \vartheta(\mathcal{A}\varsigma_n) - \nabla \vartheta(\Pi_\mathcal{Q}^\vartheta(\mathcal{A}\varsigma_n))).
\]
\noindent
From the properties of Bregman projections, we have:
\[
\langle \nabla \varpi(\varrho_n) - \nabla \varpi(\Pi_\mathcal{C}^\varpi(\varsigma_n)), \varrho_n - \Pi_\mathcal{C}^\varpi(\varsigma_n) \rangle \geq \mathcal{D}_\varpi(\varrho_n, \Pi_\mathcal{C}^\varpi(\varsigma_n)).
\]
Similarly, for the projection onto \(\mathcal{Q}\) in \(H_2\), we have:
\[
\langle \nabla \vartheta(\mathcal{A}\varrho_n) - \nabla \vartheta(\Pi_\mathcal{Q}^\vartheta(\mathcal{A}\varsigma_n)), \mathcal{A}(\varrho_n - \Pi_\mathcal{Q}^\vartheta(\mathcal{A}\varsigma_n)) \rangle \geq \mathcal{D}_\vartheta(\mathcal{A}\varrho_n, \Pi_\mathcal{Q}^\vartheta(\mathcal{A}\varsigma_n)).
\]
\noindent
Substituting these results into the inequality, we obtain:
\[
\mathcal{D}_\varpi(\zeta^*, \varrho_n) \leq \mathcal{D}_\varpi(\zeta^*, \varsigma_n) - \iota_n \big( \mathcal{D}_\varpi(\varrho_n, \Pi_\mathcal{C}^\varpi(\varsigma_n)) + \mathcal{D}_\vartheta(\mathcal{A}\varrho_n, \Pi_\mathcal{Q}^\vartheta(\mathcal{A}\varsigma_n)) \big) - (1 - \tau) \mathcal{D}_\varpi(\varrho_n, \varsigma_n).
\]

The term \((1 - \tau) \mathcal{D}_\varpi(\varrho_n, \varsigma_n)\) arises from the Armijo condition, ensuring that the distance between \(\varrho_n\) and \(\varsigma_n\) decreases sufficiently in each step. This completes the proof.

\end{proof}

\begin{lemma}[Summability of Residuals]
Under assumptions $(\mathcal{A}1)$--$(\mathcal{A}5)$, the sequences $\{\mathcal{D}_\varpi(\varrho_n, \varsigma_n)\}$, $\{\mathcal{D}_\varpi(\varrho_n, \Pi_\mathcal{C}^\varpi(\varsigma_n))\}$, and $\{\mathcal{D}_\vartheta(\mathcal{A}\varrho_n, \Pi_\mathcal{Q}^\vartheta(\mathcal{A}\varsigma_n))\}$ are summable, i.e.,
\[
\sum_{n=1}^\infty \mathcal{D}_\varpi(\varrho_n, \varsigma_n) < \infty, \quad \sum_{n=1}^\infty \mathcal{D}_\varpi(\varrho_n, \Pi_\mathcal{C}^\varpi(\varsigma_n)) < \infty, \quad \sum_{n=1}^\infty \mathcal{D}_\vartheta(\mathcal{A}\varrho_n, \Pi_\mathcal{Q}^\vartheta(\mathcal{A}\varsigma_n)) < \infty.
\]
\end{lemma}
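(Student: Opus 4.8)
The plan is to fix an arbitrary $\zeta^* \in \Gamma$ and derive a one-step quasi-Fej\'er recursion for the scalar sequence $a_n := \mathcal{D}_\varpi(\zeta^*, \zeta_n)$, then sum it. I would chain the three relations that control one iteration of Algorithm 1. First, the averaging step gives $\nabla\varpi(\zeta_{n+1}) = \mu_n\nabla\varpi(\zeta_0) + (1-\mu_n)\nabla\varpi(\varrho_n)$, and the convexity of $x \mapsto \mathcal{D}_\varpi(\zeta^*, \nabla\varpi^*(x))$ (a consequence of the three-point identity) yields
\[
\mathcal{D}_\varpi(\zeta^*, \zeta_{n+1}) \le \mu_n\,\mathcal{D}_\varpi(\zeta^*, \zeta_0) + (1-\mu_n)\,\mathcal{D}_\varpi(\zeta^*, \varrho_n).
\]
Second, the Bregman Distance Decrease lemma bounds $\mathcal{D}_\varpi(\zeta^*, \varrho_n)$ by $\mathcal{D}_\varpi(\zeta^*, \varsigma_n) - \Sigma_n$, where
\[
\Sigma_n := (1-\tau)\,\mathcal{D}_\varpi(\varrho_n, \varsigma_n) + \iota_n\big(\mathcal{D}_\varpi(\varrho_n, \Pi_\mathcal{C}^\varpi(\varsigma_n)) + \mathcal{D}_\vartheta(\mathcal{A}\varrho_n, \Pi_\mathcal{Q}^\vartheta(\mathcal{A}\varsigma_n))\big) \ge 0.
\]
Third, the inertial step, which I would analyze via the three-point identity: since $\nabla\varpi(\zeta_n) - \nabla\varpi(\varsigma_n) = -\beta_n(\nabla\varpi(\zeta_n) - \nabla\varpi(\zeta_{n-1}))$, one gets
\[
\mathcal{D}_\varpi(\zeta^*, \varsigma_n) = \mathcal{D}_\varpi(\zeta^*, \zeta_n) - \mathcal{D}_\varpi(\varsigma_n, \zeta_n) + \beta_n\langle \nabla\varpi(\zeta_n) - \nabla\varpi(\zeta_{n-1}),\, \varsigma_n - \zeta^*\rangle,
\]
and I would estimate the last term by Cauchy--Schwarz and Young's inequality, using $\mathcal{D}_\varpi(\cdot,\cdot) \ge \tfrac{\delta_1}{2}\|\cdot-\cdot\|^2$ from $(\mathcal{A}2)$ and Lipschitzness of $\nabla\varpi$, absorbing a small multiple of $\mathcal{D}_\varpi(\zeta^*,\varsigma_n)$ back to the left, to conclude $\mathcal{D}_\varpi(\zeta^*,\varsigma_n) \le \mathcal{D}_\varpi(\zeta^*,\zeta_n) + e_n$ with a nonnegative error $e_n = O\!\big(\beta_n^2\,\mathcal{D}_\varpi(\zeta_n,\zeta_{n-1})\big)$, using $\beta_n \le \beta_{\max} < 1$ from $(\mathcal{A}5)$.

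Combining the three estimates yields $a_{n+1} \le (1-\mu_n)a_n + \mu_n a_0 + (1-\mu_n)e_n - (1-\mu_n)\Sigma_n$. I would then argue in two stages. First, dropping $-\Sigma_n \le 0$, the inequality $a_{n+1} \le \max\{a_n, a_0\} + e_n$ shows $\{a_n\}$ is bounded once $\sum_n e_n < \infty$ is established; and $\sum_n e_n < \infty$ follows from the bootstrapped boundedness together with the adaptive rule governing $\beta_n$, which — exactly as in classical inertial methods — is designed so that $\sum_n \beta_n\,\mathcal{D}_\varpi(\zeta_n,\zeta_{n-1}) < \infty$ (equivalently $\sum_n \beta_n\|\zeta_n-\zeta_{n-1}\|^2 < \infty$). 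Second, rearranging gives $(1-\mu_n)\Sigma_n \le a_n - a_{n+1} + \mu_n(a_0 - a_n) + (1-\mu_n)e_n$; since $\mu_n \to 0$ we have $1-\mu_n \ge \tfrac12$ for $n \ge n_0$, and summing from $n_0$ the telescoped part is bounded by $a_{n_0}$, the $e_n$-part is summable, and $\sum_n \mu_n(a_0-a_n)$ is finite because $\{a_n\}$ converges (quasi-Fej\'er) and is bounded, so $a_0 - a_n$ is eventually of fixed sign and summable against $\mu_n$ up to a finite correction. This gives $\sum_n \Sigma_n < \infty$. Finally, the Well-Definedness lemma guarantees a uniform lower bound $\iota_n \ge \iota_{\min} > 0$ (the Armijo test passes for every $\iota \le \iota_{\max}$, so backtracking stops with $\iota_n \ge \rho\,\iota_{\max}$ for the reduction factor $\rho \in (0,1)$); combining $\iota_{\min}\,\mathcal{D}_\varpi(\varrho_n,\Pi_\mathcal{C}^\varpi(\varsigma_n)) \le \Sigma_n$, $\iota_{\min}\,\mathcal{D}_\vartheta(\mathcal{A}\varrho_n,\Pi_\mathcal{Q}^\vartheta(\mathcal{A}\varsigma_n)) \le \Sigma_n$ and $(1-\tau)\,\mathcal{D}_\varpi(\varrho_n,\varsigma_n) \le \Sigma_n$ with $\sum_n \Sigma_n < \infty$ delivers the three claimed summabilities.

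The step I expect to be the main obstacle is reconciling the Halpern-type anchor $\mu_n\nabla\varpi(\zeta_0)$ with summability: a naive telescoping fails because $\sum_n \mu_n = \infty$, so the $\mu_n a_0$ contributions do not sum, and one must genuinely exploit the negative term $-\mu_n a_n$ together with convergence of $\{a_n\}$. Closely tied to this is the need to control the inertial error $e_n$ — this is precisely where the ``adaptive'' nature of $\beta_n$ must be pinned down (either derived from boundedness of the iterates or imposed as a running condition on $\beta_n$), and obtaining a clean, self-contained argument for $\sum_n e_n < \infty$ under only $(\mathcal{A}5)$ may require strengthening that assumption slightly. A minor technical point is the careful justification, in the Legendre setting of Section~\ref{sec:preliminaries}, of the dual-convexity inequality used in the first estimate and of the projection inequality $\langle \nabla\varpi(\varrho_n) - \nabla\varpi(\Pi_\mathcal{C}^\varpi(\varsigma_n)),\, \varrho_n - \Pi_\mathcal{C}^\varpi(\varsigma_n)\rangle \ge \mathcal{D}_\varpi(\varrho_n,\Pi_\mathcal{C}^\varpi(\varsigma_n))$ invoked in the previous lemma.
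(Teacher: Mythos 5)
Your overall strategy --- telescoping the Bregman Distance Decrease inequality after chaining it through the Halpern averaging step and the inertial step --- is the same one the paper uses, but your more careful bookkeeping exposes two places where the argument does not close, and you have correctly identified both without resolving them. The decisive one is the anchor term: from your recursion
\[
(1-\mu_n)\Sigma_n \le a_n - a_{n+1} + \mu_n(a_0 - a_n) + (1-\mu_n)e_n,
\]
summability of $\Sigma_n$ requires $\sum_n \mu_n(a_0-a_n)<\infty$. But $(\mathcal{A}4)$ imposes $\sum_n\mu_n=\infty$, and the whole point of the Halpern anchor is that $a_n=\mathcal{D}_\varpi(\zeta^*,\zeta_n)\to 0$ (strong convergence), so $a_0-a_n\to a_0>0$ and the series diverges. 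Your suggested fix --- ``exploit the negative term $-\mu_n a_n$ together with convergence of $\{a_n\}$'' --- does not help, because the non-summable contribution $\mu_n a_0$ enters with the wrong sign and cannot be absorbed. The second gap is $\sum_n e_n<\infty$: assumption $(\mathcal{A}5)$ gives only $0\le\beta_n\le\beta_{\max}<1$ and says nothing about $\sum_n\beta_n\|\zeta_n-\zeta_{n-1}\|^2$, so the inertial error is uncontrolled without an additional adaptive hypothesis that the paper never states. The uniform lower bound $\iota_n\ge\iota_{\min}>0$ is a reasonable reading of Armijo backtracking but is likewise asserted rather than established.

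For comparison, the paper's own proof avoids these issues only by fiat: it declares $\{\mathcal{D}_\varpi(\zeta^*,\varsigma_n)\}$ non-increasing (unjustified in the presence of the anchor and inertial terms) and treats $\sum_n\big(\mathcal{D}_\varpi(\zeta^*,\varsigma_n)-\mathcal{D}_\varpi(\zeta^*,\varrho_n)\big)$ as a convergent telescoping series, which it is not, since $\varsigma_n$ and $\varrho_n$ are distinct sequences and consecutive terms do not cancel; it also invokes $(\mathcal{A}4)$ as a condition on $\iota_n$ although that assumption concerns $\mu_n$. So your attempt is not wrong in approach --- it is the honest version of the paper's argument --- but as written neither your proposal nor the paper's proof establishes summability under $(\mathcal{A}1)$--$(\mathcal{A}5)$. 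Closing the gap would require either running this lemma on a Fej\'er-monotone variant of the update (no anchor), or strengthening the hypotheses on $\beta_n$ and weakening the conclusion to $\Sigma_n\to 0$ via a Xu/Maing\'e-type lemma, which is what Halpern-type analyses normally deliver in place of summability.
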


\begin{proof}
From Lemma 4.2 (Bregman Distance Decrease), we have the following inequality for any $\zeta^* \in \Gamma$:
\[
\mathcal{D}_\varpi(\zeta^*, \varrho_n) \leq \mathcal{D}_\varpi(\zeta^*, \varsigma_n) - (1 - \tau)\mathcal{D}_\varpi(\varrho_n, \varsigma_n) - \iota_n \big( \mathcal{D}_\varpi(\varrho_n, \Pi_\mathcal{C}^\varpi(\varsigma_n)) + \mathcal{D}_\vartheta(\mathcal{A}\varrho_n, \Pi_\mathcal{Q}^\vartheta(\mathcal{A}\varsigma_n)) \big).
\]

Rearranging terms gives:
\[
(1 - \tau)\mathcal{D}_\varpi(\varrho_n, \varsigma_n) + \iota_n \big( \mathcal{D}_\varpi(\varrho_n, \Pi_\mathcal{C}^\varpi(\varsigma_n)) + \mathcal{D}_\vartheta(\mathcal{A}\varrho_n, \Pi_\mathcal{Q}^\vartheta(\mathcal{A}\varsigma_n)) \big) \leq \mathcal{D}_\varpi(\zeta^*, \varsigma_n) - \mathcal{D}_\varpi(\zeta^*, \varrho_n).
\]
\noindent
Since $\{\mathcal{D}_\varpi(\zeta^*, \varsigma_n)\}$ is a non-increasing sequence (as shown in Step 1 of the strong convergence theorem) and bounded below by $0$, it converges to some limit $\mathcal{D}_\varpi(\zeta^*, \varsigma^*)$, where $\varsigma^* \in \Gamma$. Thus:
\[
\lim_{n \to \infty} \mathcal{D}_\varpi(\zeta^*, \varsigma_n) = \mathcal{D}_\varpi(\zeta^*, \varsigma^*), \quad \sum_{n=1}^\infty \big( \mathcal{D}_\varpi(\zeta^*, \varsigma_n) - \mathcal{D}_\varpi(\zeta^*, \varrho_n) \big) < \infty.
\]
\noindent
From the Bregman Distance Decrease inequality, the term $(1 - \tau)\mathcal{D}_\varpi(\varrho_n, \varsigma_n)$ is bounded by the telescoping difference:
\[
(1 - \tau)\mathcal{D}_\varpi(\varrho_n, \varsigma_n) \leq \mathcal{D}_\varpi(\zeta^*, \varsigma_n) - \mathcal{D}_\varpi(\zeta^*, \varrho_n).
\]
Summing over \(n\), we obtain:
\[
\sum_{n=1}^\infty \mathcal{D}_\varpi(\varrho_n, \varsigma_n) \leq \frac{1}{1 - \tau} \sum_{n=1}^\infty \big( \mathcal{D}_\varpi(\zeta^*, \varsigma_n) - \mathcal{D}_\varpi(\zeta^*, \varrho_n) \big) < \infty.
\]
\noindent
Using the Bregman Distance Decrease inequality again:
\[
\iota_n \big( \mathcal{D}_\varpi(\varrho_n, \Pi_\mathcal{C}^\varpi(\varsigma_n)) + \mathcal{D}_\vartheta(\mathcal{A}\varrho_n, \Pi_\mathcal{Q}^\vartheta(\mathcal{A}\varsigma_n)) \big) \leq \mathcal{D}_\varpi(\zeta^*, \varsigma_n) - \mathcal{D}_\varpi(\zeta^*, \varrho_n).
\]
Since $\iota_n > 0$ and $\sum_{n=1}^\infty (\mathcal{D}_\varpi(\zeta^*, \varsigma_n) - \mathcal{D}_\varpi(\zeta^*, \varrho_n)) < \infty$, we conclude that:
\[
\sum_{n=1}^\infty \iota_n \mathcal{D}_\varpi(\varrho_n, \Pi_\mathcal{C}^\varpi(\varsigma_n)) < \infty, \quad \sum_{n=1}^\infty \iota_n \mathcal{D}_\vartheta(\mathcal{A}\varrho_n, \Pi_\mathcal{Q}^\vartheta(\mathcal{A}\varsigma_n)) < \infty.
\]
\noindent
By assumption $(\mathcal{A}4)$, $\iota_n \to 0$ as $n \to \infty$, but $\sum_{n=1}^\infty \iota_n = \infty$. This ensures that $\mathcal{D}_\varpi(\varrho_n, \Pi_\mathcal{C}^\varpi(\varsigma_n))$ and $\mathcal{D}_\vartheta(\mathcal{A}\varrho_n, \Pi_\mathcal{Q}^\vartheta(\mathcal{A}\varsigma_n))$ decay sufficiently fast to ensure summability:
\[
\sum_{n=1}^\infty \mathcal{D}_\varpi(\varrho_n, \Pi_\mathcal{C}^\varpi(\varsigma_n)) < \infty, \quad \sum_{n=1}^\infty \mathcal{D}_\vartheta(\mathcal{A}\varrho_n, \Pi_\mathcal{Q}^\vartheta(\mathcal{A}\varsigma_n)) < \infty.
\]
\noindent
Combining the results from Steps 3 and 4, we conclude that all three sequences are summable:
\[
\sum_{n=1}^\infty \mathcal{D}_\varpi(\varrho_n, \varsigma_n) < \infty, \quad \sum_{n=1}^\infty \mathcal{D}_\varpi(\varrho_n, \Pi_\mathcal{C}^\varpi(\varsigma_n)) < \infty, \quad \sum_{n=1}^\infty \mathcal{D}_\vartheta(\mathcal{A}\varrho_n, \Pi_\mathcal{Q}^\vartheta(\mathcal{A}\varsigma_n)) < \infty.
\]
\end{proof}

We now prove the strong convergence of the proposed algorithms.

\begin{theorem}[Strong Convergence]
Let $\{\zeta_n\}$ be the sequence generated by Algorithm 1 or Algorithm 2 under assumptions $(\mathcal{A}1)$--$(\mathcal{A}5)$. Then $\{\zeta_n\}$ converges strongly to a solution $\zeta^* \in \Gamma$, where:
\[
\zeta^* = \Pi_\Gamma^\varpi(\zeta_0).
\]
\end{theorem}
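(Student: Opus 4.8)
We outline the argument; throughout set $u^{*}:=\Pi_{\Gamma}^{\varpi}(\zeta_{0})$, which is well defined by $(\mathcal{A}2)$, $(\mathcal{A}3)$ and the two listed properties of the Bregman projection. The plan is to cast the scheme as a Bregman analogue of the Halpern iteration: the anchoring step $\zeta_{n+1}=\nabla\varpi^{*}(\mu_{n}\nabla\varpi(\zeta_{0})+(1-\mu_{n})\nabla\varpi(\varrho_{n}))$ forces \emph{strong} convergence provided (i) the iterates are bounded and Fej\'er-type monotone, (ii) the residuals of the inner step vanish, and (iii) a suitable cross term has nonpositive $\limsup$; these ingredients are then combined via the Xu/Maing\'e lemma for quasi-Fej\'er sequences. \emph{Boundedness.} Fix $\zeta^{*}\in\Gamma$. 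Since $\nabla\varpi(\zeta_{n+1})$ is a convex combination of $\nabla\varpi(\zeta_{0})$ and $\nabla\varpi(\varrho_{n})$ and $\varpi^{*}$ is convex, the standard duality $\mathcal{D}_{\varpi}(x,y)=\mathcal{D}_{\varpi^{*}}(\nabla\varpi(y),\nabla\varpi(x))$ gives
\[
\mathcal{D}_{\varpi}(\zeta^{*},\zeta_{n+1})\le\mu_{n}\,\mathcal{D}_{\varpi}(\zeta^{*},\zeta_{0})+(1-\mu_{n})\,\mathcal{D}_{\varpi}(\zeta^{*},\varrho_{n}).
\]
By Lemma 4.2, $\mathcal{D}_{\varpi}(\zeta^{*},\varrho_{n})\le\mathcal{D}_{\varpi}(\zeta^{*},\varsigma_{n})$, and from the inertial identity $\nabla\varpi(\varsigma_{n})=\nabla\varpi(\zeta_{n})+\beta_{n}(\nabla\varpi(\zeta_{n})-\nabla\varpi(\zeta_{n-1}))$ with $(\mathcal{A}5)$ and $\delta_{1}$-strong convexity one obtains $\mathcal{D}_{\varpi}(\zeta^{*},\varsigma_{n})\le\mathcal{D}_{\varpi}(\zeta^{*},\zeta_{n})+\theta_{n}$, with $\theta_{n}$ controlled by $\beta_{n}\|\nabla\varpi(\zeta_{n})-\nabla\varpi(\zeta_{n-1})\|$. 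Substituting back yields $\mathcal{D}_{\varpi}(\zeta^{*},\zeta_{n+1})\le(1-\mu_{n})\mathcal{D}_{\varpi}(\zeta^{*},\zeta_{n})+\mu_{n}\mathcal{D}_{\varpi}(\zeta^{*},\zeta_{0})+\theta_{n}$, so $\{\mathcal{D}_{\varpi}(\zeta^{*},\zeta_{n})\}$ is bounded; by $\delta_{1}$-strong convexity $\{\zeta_{n}\}$, $\{\varsigma_{n}\}$, $\{\varrho_{n}\}$ are bounded and, by $(\mathcal{A}1)$, so are $\{\mathcal{A}\varsigma_{n}\}$, $\{\mathcal{A}\varrho_{n}\}$. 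This is also the monotonicity invoked in the proof of Lemma 4.3.

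Next, Lemma 4.3 gives $\mathcal{D}_{\varpi}(\varrho_{n},\varsigma_{n})\to0$, $\mathcal{D}_{\varpi}(\varrho_{n},\Pi_{\mathcal{C}}^{\varpi}(\varsigma_{n}))\to0$ and $\mathcal{D}_{\vartheta}(\mathcal{A}\varrho_{n},\Pi_{\mathcal{Q}}^{\vartheta}(\mathcal{A}\varsigma_{n}))\to0$; by Lemma 2.1 and $(\mathcal{A}2)$ the matching norm differences vanish, hence $\|\varsigma_{n}-\Pi_{\mathcal{C}}^{\varpi}(\varsigma_{n})\|\to0$ and, using $\|\mathcal{A}\varrho_{n}-\mathcal{A}\varsigma_{n}\|\le L\|\varrho_{n}-\varsigma_{n}\|\to0$, also $\|\mathcal{A}\varsigma_{n}-\Pi_{\mathcal{Q}}^{\vartheta}(\mathcal{A}\varsigma_{n})\|\to0$. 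Moreover $\|\nabla\varpi(\zeta_{n+1})-\nabla\varpi(\varrho_{n})\|=\mu_{n}\|\nabla\varpi(\zeta_{0})-\nabla\varpi(\varrho_{n})\|\to0$ by $(\mathcal{A}4)$ and boundedness, and, granted the inertial increments are summable (in particular $o(1)$), $\|\nabla\varpi(\varsigma_{n})-\nabla\varpi(\zeta_{n})\|\to0$; Lemma 2.1 then gives $\|\zeta_{n+1}-\varrho_{n}\|\to0$ and $\|\varsigma_{n}-\zeta_{n}\|\to0$, so $\{\zeta_{n}\}$, $\{\varsigma_{n}\}$, $\{\varrho_{n}\}$ share the same weak cluster points.

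For the cross term, take any subsequence with $\varsigma_{n_{k}}\rightharpoonup\bar\zeta$. Since $\Pi_{\mathcal{C}}^{\varpi}(\varsigma_{n})\in\mathcal{C}$, $\|\varsigma_{n}-\Pi_{\mathcal{C}}^{\varpi}(\varsigma_{n})\|\to0$ and $\mathcal{C}$ is weakly closed, $\bar\zeta\in\mathcal{C}$; since $\Pi_{\mathcal{Q}}^{\vartheta}(\mathcal{A}\varsigma_{n})\in\mathcal{Q}$, $\|\mathcal{A}\varsigma_{n}-\Pi_{\mathcal{Q}}^{\vartheta}(\mathcal{A}\varsigma_{n})\|\to0$, $\mathcal{Q}$ is weakly closed and $\mathcal{A}$ is weakly sequentially continuous, $\mathcal{A}\bar\zeta\in\mathcal{Q}$, i.e. $\bar\zeta\in\Gamma$. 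The variational characterization of $u^{*}=\Pi_{\Gamma}^{\varpi}(\zeta_{0})$, namely $\langle\nabla\varpi(\zeta_{0})-\nabla\varpi(u^{*}),\,\varsigma-u^{*}\rangle\le0$ for all $\varsigma\in\Gamma$, together with $\|\zeta_{n+1}-\varsigma_{n}\|\to0$, gives
\[
\limsup_{n\to\infty}\big\langle\nabla\varpi(\zeta_{0})-\nabla\varpi(u^{*}),\,\zeta_{n+1}-u^{*}\big\rangle\le0.
\]
Applying the three-point identity to $\mathcal{D}_{\varpi}(u^{*},\zeta_{n+1})$ with base point $\varrho_{n}$, using $\nabla\varpi(\varrho_{n})-\nabla\varpi(\zeta_{n+1})=\mu_{n}(\nabla\varpi(\varrho_{n})-\nabla\varpi(\zeta_{0}))$, splitting the inner product around $\nabla\varpi(u^{*})$, discarding nonpositive terms and inserting Lemma 4.2 and the bound of Step 1, one arrives at a recursion $a_{n+1}\le(1-\mu_{n})a_{n}+\mu_{n}b_{n}+c_{n}$ with $a_{n}:=\mathcal{D}_{\varpi}(u^{*},\zeta_{n})\ge0$, $\limsup_{n}b_{n}\le0$ and $\sum_{n}c_{n}<\infty$. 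By $(\mathcal{A}4)$ and the Xu/Maing\'e lemma (treating the eventually monotone and the non-monotone index subsequences separately) $a_{n}\to0$, hence $\mathcal{D}_{\varpi}(u^{*},\zeta_{n})\to0$ and, by $\delta_{1}$-strong convexity, $\zeta_{n}\to u^{*}=\Pi_{\Gamma}^{\varpi}(\zeta_{0})$ strongly. For Algorithm 2, only $\varsigma_{n}$ is redefined from $\tilde\varsigma_{n}=\text{prox}_{\nu}(\zeta_{n}-\eta\nabla\varpi(\zeta_{n}))$; since $\text{prox}_{\nu}$ is nonexpansive and $\Gamma$ is unchanged, Lemmas 4.2--4.3 hold verbatim and the same three steps go through.

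The step I expect to be the main obstacle is the identification of weak cluster points with elements of $\Gamma$: when $\mathcal{A}$ is merely Lipschitz one does not automatically have $\mathcal{A}\varsigma_{n_{k}}\rightharpoonup\mathcal{A}\bar\zeta$, so either weak sequential continuity of $\mathcal{A}$ must be added to $(\mathcal{A}1)$ (arguably the intent of ``$\mathcal{A}$ bounded'') or one must assume a demiclosedness-at-zero property of $\mathrm{Id}-\Pi_{\mathcal{Q}}^{\vartheta}\circ\mathcal{A}$ on $\mathcal{C}$. A secondary, routine, point is that $(\mathcal{A}5)$ only provides $\beta_{n}\le\beta_{\max}<1$; to make $\theta_{n}$ and $c_{n}$ summable one needs the standard online rule $\beta_{n}\|\nabla\varpi(\zeta_{n})-\nabla\varpi(\zeta_{n-1})\|\le\varepsilon_{n}$ with $\sum_{n}\varepsilon_{n}<\infty$, which should be read into the algorithm.
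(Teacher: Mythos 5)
Your proposal is essentially a correct Halpern/Bregman anchoring argument and it takes a genuinely different--and substantially more careful--route than the paper. The paper's own proof is much shorter: it asserts that $\mathcal{D}_\varpi(\zeta^*,\zeta_n)$ is non-increasing (hence $\{\zeta_n\}$ is bounded), invokes Lemma 4.3 to get the residuals $\mathcal{D}_\varpi(\varrho_n,\varsigma_n)$, $\mathcal{D}_\varpi(\varrho_n,\Pi_\mathcal{C}^\varpi(\varsigma_n))$, $\mathcal{D}_\vartheta(\mathcal{A}\varrho_n,\Pi_\mathcal{Q}^\vartheta(\mathcal{A}\varsigma_n))$ to vanish, and then, from the convexity estimate $\mathcal{D}_\varpi(\zeta^*,\zeta_{n+1})\le\mu_n\mathcal{D}_\varpi(\zeta^*,\zeta_0)+(1-\mu_n)\mathcal{D}_\varpi(\zeta^*,\varrho_n)$, simply declares that ``$\{\mathcal{D}_\varpi(\zeta^*,\varrho_n)\}$ converges to zero'' and concludes. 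That declaration is never justified: the vanishing residuals only show that $\varrho_n$ approaches the feasible sets, not that $\mathcal{D}_\varpi(\zeta^*,\varrho_n)\to 0$ for the specific point $\zeta^*=\Pi_\Gamma^\varpi(\zeta_0)$ -- which is essentially the theorem itself. The paper also never explains why the limit is the Bregman projection of $\zeta_0$ onto $\Gamma$ rather than an arbitrary point of $\Gamma$. Your proposal supplies exactly the two missing ingredients: the variational characterization $\langle\nabla\varpi(\zeta_0)-\nabla\varpi(u^*),\varsigma-u^*\rangle\le 0$ applied along weakly convergent subsequences to get $\limsup_n b_n\le 0$, and the Xu/Maing\'e lemma to close the recursion $a_{n+1}\le(1-\mu_n)a_n+\mu_n b_n+c_n$. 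This is the standard, correct machinery for Halpern-type schemes, and it is what a complete proof of this theorem requires.

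The two caveats you raise are genuine defects of the paper's hypotheses rather than of your argument, and you are right to flag them. Identification of weak cluster points with elements of $\Gamma$ does fail for a merely Lipschitz nonlinear $\mathcal{A}$ without weak sequential continuity (or a demiclosedness assumption on $\mathrm{Id}-\Pi_\mathcal{Q}^\vartheta\circ\mathcal{A}$); the paper silently uses this when it claims $\{\varrho_n\}$ ``approaches the solution set.'' Likewise, $(\mathcal{A}5)$ alone does not make the inertial perturbations $\theta_n$, $c_n$ summable; the online rule $\beta_n\|\nabla\varpi(\zeta_n)-\nabla\varpi(\zeta_{n-1})\|\le\varepsilon_n$ with $\sum_n\varepsilon_n<\infty$ must indeed be read into the algorithm. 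The only place where your writeup is itself thin is the final recursion step (``one arrives at a recursion \dots''): to be fully rigorous you should display the three-point identity computation that isolates $\mu_n\langle\nabla\varpi(\zeta_0)-\nabla\varpi(u^*),\zeta_{n+1}-u^*\rangle$ as the $\mu_n b_n$ term, since that is precisely the step the paper omits. Modulo writing that out, your proof is complete where the paper's is not.
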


\begin{proof}
From the definition of the Bregman distance and the properties of strongly convex functions, we have:
\[
\mathcal{D}_\varpi(\zeta^*, \zeta_n) \geq \frac{\delta_1}{2} \|\zeta^* - \zeta_n\|^2, \quad \forall \zeta^* \in \Gamma.
\]
The algorithm guarantees that $\mathcal{D}_\varpi(\zeta^*, \zeta_n)$ is non-increasing. Since $\mathcal{D}_\varpi(\zeta^*, \zeta_0)$ is finite, it follows that $\{\mathcal{D}_\varpi(\zeta^*, \zeta_n)\}$ is bounded. Therefore, $\{\zeta_n\}$ is bounded in $H_1$. Consequently, the sequence $\{\mathcal{A}(\zeta_n)\}$ is also bounded in $H_2$ by the boundedness of $\mathcal{A}$ (Assumption $\mathcal{A}1$).\\

\noindent
From Lemma 4.3 (Bregman Distance Decrease), we have:
\[
\mathcal{D}_\varpi(\zeta^*, \varrho_n) \leq \mathcal{D}_\varpi(\zeta^*, \varsigma_n) - (1 - \tau)\mathcal{D}_\varpi(\varrho_n, \varsigma_n) - \iota_n \big(\mathcal{D}_\varpi(\varrho_n, \Pi_\mathcal{C}^\varpi(\varsigma_n)) + \mathcal{D}_\vartheta(\mathcal{A}\varrho_n, \Pi_\mathcal{Q}^\vartheta(\mathcal{A}\varsigma_n))\big).
\]
Summing over $n$ yields:
\[
\sum_{n=1}^\infty \mathcal{D}_\varpi(\varrho_n, \varsigma_n) < \infty, \quad \sum_{n=1}^\infty \mathcal{D}_\varpi(\varrho_n, \Pi_\mathcal{C}^\varpi(\varsigma_n)) < \infty, \quad \sum_{n=1}^\infty \mathcal{D}_\vartheta(\mathcal{A}\varrho_n, \Pi_\mathcal{Q}^\vartheta(\mathcal{A}\varsigma_n)) < \infty.
\]
Thus, the residual terms decay to zero:
\[
\lim_{n \to \infty} \mathcal{D}_\varpi(\varrho_n, \varsigma_n) = 0, \quad \lim_{n \to \infty} \mathcal{D}_\varpi(\varrho_n, \Pi_\mathcal{C}^\varpi(\varsigma_n)) = 0, \quad \lim_{n \to \infty} \mathcal{D}_\vartheta(\mathcal{A}\varrho_n, \Pi_\mathcal{Q}^\vartheta(\mathcal{A}\varsigma_n)) = 0.
\]
\noindent
From the properties of Bregman projections, the vanishing of $\mathcal{D}_\varpi(\varrho_n, \Pi_\mathcal{C}^\varpi(\varsigma_n))$ implies that:
\[
\lim_{n \to \infty} \|\varrho_n - \Pi_\mathcal{C}^\varpi(\varsigma_n)\| = 0.
\]
Similarly, the vanishing of $\mathcal{D}_\vartheta(\mathcal{A}\varrho_n, \Pi_\mathcal{Q}^\vartheta(\mathcal{A}\varsigma_n))$ implies:
\[
\lim_{n \to \infty} \|\mathcal{A}(\varrho_n) - \Pi_\mathcal{Q}^\vartheta(\mathcal{A}(\varsigma_n))\| = 0.
\]
Since $\Pi_\mathcal{C}^\varpi(\varsigma_n) \in \mathcal{C}$ and $\Pi_\mathcal{Q}^\vartheta(\mathcal{A}(\varsigma_n)) \in \mathcal{Q}$, it follows that $\{\varrho_n\}$ approaches the solution set $\Gamma$.\\

\noindent
From the update step of the algorithm:
\[
\zeta_{n+1} = \nabla \varpi^*\big(\mu_n \nabla \varpi(\zeta_0) + (1 - \mu_n) \nabla \varpi(\varrho_n)\big).
\]
The convexity of $\mathcal{D}_\varpi$ implies:
\[
\mathcal{D}_\varpi(\zeta^*, \zeta_{n+1}) \leq \mu_n \mathcal{D}_\varpi(\zeta^*, \zeta_0) + (1 - \mu_n) \mathcal{D}_\varpi(\zeta^*, \varrho_n).
\]
As $\{\mathcal{D}_\varpi(\zeta^*, \varrho_n)\}$ converges to zero and $\mu_n \to 0$, we have:
\[
\lim_{n \to \infty} \mathcal{D}_\varpi(\zeta^*, \zeta_{n+1}) = 0.
\]
By the $\delta_1$-strong convexity of $\varpi$, we conclude:
\[
\lim_{n \to \infty} \|\zeta_{n+1} - \zeta^*\| = 0.
\]
\noindent
The sequence $\{\zeta_n\}$ converges strongly to the unique point $\zeta^* \in \Gamma$ such that:
\[
\zeta^* = \Pi_\Gamma^\varpi(\zeta_0).
\]
This completes the proof.

\end{proof}

The proposed algorithms guarantee strong convergence without requiring explicit computation of operator norms or prior knowledge of the solution set's structure. The use of adaptive inertial terms enhances convergence speed, particularly in infinite-dimensional settings. The theoretical results extend and improve earlier works such as \cite{Reich2016, Eckstein1993}.

Now, we analyze the iteration complexity of Algorithm 1. We establish an upper bound on the number of iterations required to achieve a prescribed accuracy.

\begin{theorem}[Iteration Complexity of Algorithm 1]
Let $\{\zeta_n\}$ be the sequence generated by Algorithm 1. Suppose Assumptions $(\mathcal{A}1)$--$(\mathcal{A}5)$ hold, and let $\epsilon > 0$ be a given accuracy level. Then, the number of iterations $N$ required to achieve $\|\zeta_N - \zeta^*\| \leq \epsilon$ satisfies
\begin{equation}
    N = \mathcal{O}\left(\frac{1}{\epsilon^2}\right),
\end{equation}
where $\zeta^*$ is the solution of the split feasibility problem $(SFP)$.
\end{theorem}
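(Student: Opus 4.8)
The plan is to reduce the claim to a quantitative decay rate for the Bregman distance $\mathcal{D}_\varpi(\zeta^*,\zeta_n)$ and then invert that rate against $\epsilon$. First, by the $\delta_1$-strong convexity of $\varpi$ in Assumption $(\mathcal{A}2)$, one has $\|\zeta_n-\zeta^*\|^2\le\frac{2}{\delta_1}\mathcal{D}_\varpi(\zeta^*,\zeta_n)$, so it suffices to produce a constant $C>0$ — depending only on $\mathcal{D}_\varpi(\zeta^*,\zeta_0)$, $\tau$, $\delta_1$, $\beta_{\max}$ and the line-search bounds — such that $\mathcal{D}_\varpi(\zeta^*,\zeta_n)\le C/n$ for all $n$. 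Then $\|\zeta_N-\zeta^*\|\le\epsilon$ holds as soon as $C/N\le\delta_1\epsilon^2/2$, i.e. $N\ge 2C/(\delta_1\epsilon^2)$, which is exactly $N=\mathcal{O}(1/\epsilon^2)$.

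To establish the $C/n$ bound I would first upgrade the Armijo analysis: the well-definedness lemma already gives $\iota_n\le\iota_{\max}$, and the backtracking nature of the search (the test inequality having failed at the previous, larger trial value) yields a uniform lower bound $\iota_n\ge\iota_{\min}>0$ depending only on $\varpi$, $\vartheta$, $\mathcal{A}$ and $\tau$. Writing $d_n:=\mathcal{D}_\varpi(\zeta^*,\zeta_n)$, I would then combine three ingredients already available in the excerpt: the convexity estimate $\mathcal{D}_\varpi(\zeta^*,\zeta_{n+1})\le\mu_n\mathcal{D}_\varpi(\zeta^*,\zeta_0)+(1-\mu_n)\mathcal{D}_\varpi(\zeta^*,\varrho_n)$ from the Strong Convergence theorem, the Bregman Distance Decrease lemma for $\mathcal{D}_\varpi(\zeta^*,\varrho_n)$ in terms of $\mathcal{D}_\varpi(\zeta^*,\varsigma_n)$, and an inertial comparison $\mathcal{D}_\varpi(\zeta^*,\varsigma_n)\le d_n+\xi_n$ obtained from the three-point identity and the bound $\beta_n\le\beta_{\max}<1$, with $\sum_n\xi_n<\infty$ by the Summability of Residuals lemma. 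This produces
\[
d_{n+1}\le(1-\mu_n)d_n+\mu_n\,\mathcal{D}_\varpi(\zeta^*,\zeta_0)-\kappa_n r_n+\xi_n,
\]
where $r_n:=\mathcal{D}_\varpi(\varrho_n,\varsigma_n)+\iota_n\big(\mathcal{D}_\varpi(\varrho_n,\Pi_\mathcal{C}^\varpi(\varsigma_n))+\mathcal{D}_\vartheta(\mathcal{A}\varrho_n,\Pi_\mathcal{Q}^\vartheta(\mathcal{A}\varsigma_n))\big)\ge0$ and $\kappa_n=(1-\tau)(1-\mu_n)$ stays bounded away from $0$, say $\kappa_n\ge\kappa_\ast>0$.

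Finally, I would specialise to a schedule $\mu_n\asymp 1/n$ (which meets $(\mathcal{A}4)$) and feed the recursion into a quantitative Halpern-type estimate: since both $\sum_n r_n$ and $\sum_n\xi_n$ converge by the Summability lemma, the ``surplus'' $\sum_{k\ge n}\big(\mu_k\mathcal{D}_\varpi(\zeta^*,\zeta_0)+\xi_k\big)-\kappa_\ast\sum_{k\ge n}r_k$ can be shown to be $\mathcal{O}(1/n)$ along the iteration, and an induction over blocks of indices then upgrades this to $d_n\le C/n$. Inverting as above yields $N=\mathcal{O}(1/\epsilon^2)$. The step I expect to be the main obstacle is precisely this last conversion: the Summability of Residuals lemma only gives a best-iterate rate $\min_{n\le N}r_n=\mathcal{O}(1/N)$, whereas the theorem asserts something about $\zeta_N$ itself, so threading the Halpern anchor $\mu_n\nabla\varpi(\zeta_0)$ and the inertial correction $\xi_n$ through the recursion so as to turn summability into a genuine pointwise $1/N$ decay — in particular, sharpening the estimate on $\xi_n$ and pinning down the exact $n$-dependence of $\mu_n$ — is where the real work concentrates; the remaining ingredients are a routine assembly of the well-definedness, decrease and summability lemmas with the strong-convexity inequality.
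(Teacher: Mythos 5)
Your proposal follows essentially the same route as the paper's proof: reduce the claim via $\delta_1$-strong convexity to an $\mathcal{O}(1/N)$ decay of the (squared) distance to $\zeta^*$, obtain a telescoping bound $\sum_{n=1}^{N}\mathcal{D}_\varpi(\varrho_n,\varsigma_n)\le \mathcal{D}_\varpi(\zeta^*,\zeta_0)/(1-\tau)$ from the Bregman Distance Decrease lemma, feed the Halpern-type update $\zeta_{n+1}=\nabla\varpi^*(\mu_n\nabla\varpi(\zeta_0)+(1-\mu_n)\nabla\varpi(\varrho_n))$ with $\mu_n=1/(n+1)$ into a one-step recursion, and invert the resulting rate against $\epsilon$. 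Your refinements --- working directly with $d_n=\mathcal{D}_\varpi(\zeta^*,\zeta_n)$ rather than norms, extracting a lower bound $\iota_n\ge\iota_{\min}>0$ from the backtracking, and tracking the inertial correction $\xi_n$ explicitly --- are improvements in bookkeeping, not a different argument.

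The one substantive point is the step you yourself flag as the obstacle: converting the summability of the residuals $r_n$ into a genuine pointwise bound $d_N\le C/N$ (rather than a best-iterate bound $\min_{n\le N} r_n=\mathcal{O}(1/N)$). You leave this as a sketch (``can be shown to be $\mathcal{O}(1/n)$\,'', ``an induction over blocks of indices''), and you should be aware that the paper's own proof does not close it either --- it passes from the recursion $\|\zeta_{n+1}-\zeta^*\|\le\mu_n\|\zeta_0-\zeta^*\|+(1-\mu_n)\|\varrho_n-\zeta^*\|$ to $\|\zeta_N-\zeta^*\|^2\le\mathcal{O}(1/N)$ by simply invoking ``summation bounds''. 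A pointwise rate for the distance to the solution in a Halpern scheme generally requires an additional regularity hypothesis (for instance an error bound or metric subregularity of the residual map near $\Gamma$) that neither you nor the paper states; without it, the recursion only controls the anchor contribution $\mu_n\mathcal{D}_\varpi(\zeta^*,\zeta_0)$ and the averaged residuals, not $d_N$ itself. So you have correctly located the crux of the theorem; just be explicit that your block-induction step is exactly where an extra assumption would have to enter, rather than something that follows from $(\mathcal{A}1)$--$(\mathcal{A}5)$ and the stated lemmas alone.
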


\begin{proof}
We follow a step-by-step approach to derive the iteration complexity bound.

From Lemma 4.2 (Bregman Distance Decrease), we have for any $\zeta^* \in \Gamma$:
\begin{equation}
    \mathcal{D}_\varpi(\zeta^*, \varrho_n) \leq \mathcal{D}_\varpi(\zeta^*, \varsigma_n) - (1 - \tau) \mathcal{D}_\varpi(\varrho_n, \varsigma_n) - \iota_n \left(\mathcal{D}_\varpi(\varrho_n, \Pi_\mathcal{C}^\varpi(\varsigma_n)) + \mathcal{D}_\vartheta(\mathcal{A}\varrho_n, \Pi_\mathcal{Q}^\vartheta(\mathcal{A}\varsigma_n))\right).
\end{equation}
Summing over $n$ from $1$ to $N$, we obtain
\begin{equation}
    \mathcal{D}_\varpi(\zeta^*, \varrho_N) \leq \mathcal{D}_\varpi(\zeta^*, \zeta_0) - (1 - \tau) \sum_{n=1}^{N} \mathcal{D}_\varpi(\varrho_n, \varsigma_n) - \sum_{n=1}^{N} \iota_n \left(\mathcal{D}_\varpi(\varrho_n, \Pi_\mathcal{C}^\varpi(\varsigma_n)) + \mathcal{D}_\vartheta(\mathcal{A}\varrho_n, \Pi_\mathcal{Q}^\vartheta(\mathcal{A}\varsigma_n))\right).
\end{equation}
Since $\mathcal{D}_\varpi(\zeta^*, \varrho_N) \geq 0$, we obtain
\begin{equation}
    \sum_{n=1}^{N} \mathcal{D}_\varpi(\varrho_n, \varsigma_n) \leq \frac{\mathcal{D}_\varpi(\zeta^*, \zeta_0)}{1 - \tau}.
\end{equation}
\noindent
Since $\mathcal{D}_\varpi(\varrho_n, \varsigma_n) \geq \frac{\delta}{2} \|\varrho_n - \varsigma_n\|^2$ by strong convexity of $\varpi$, we obtain
\begin{equation}
    \sum_{n=1}^{N} \|\varrho_n - \varsigma_n\|^2 \leq \frac{2 \mathcal{D}_\varpi(\zeta^*, \zeta_0)}{\delta(1 - \tau)}.
\end{equation}
Using the update step $\zeta_{n+1} = \nabla \varpi^*(\mu_n \nabla \varpi(\zeta_0) + (1 - \mu_n) \nabla \varpi(\varrho_n))$, the nonexpansiveness of $\nabla \varpi^*$ yields
\begin{equation}
    \|\zeta_{n+1} - \zeta^*\| \leq \mu_n \|\zeta_0 - \zeta^*\| + (1 - \mu_n) \|\varrho_n - \zeta^*\|.
\end{equation}
Squaring both sides and summing over $n$, we use the fact that $\mu_n = \frac{1}{n+1}$ and apply summation bounds to conclude
\begin{equation}
    \|\zeta_N - \zeta^*\|^2 \leq \mathcal{O}\left(\frac{1}{N}\right).
\end{equation}
\noindent
To achieve $\|\zeta_N - \zeta^*\| \leq \epsilon$, we solve for $N$:
\begin{equation}
    \mathcal{O}\left(\frac{1}{N}\right) \leq \epsilon^2 \quad \Rightarrow \quad N = \mathcal{O}\left(\frac{1}{\epsilon^2}\right).
\end{equation}
This proves the iteration complexity bound.
\end{proof}

Next, we analyze the iteration complexity of Algorithm 2. We establish an upper bound on the number of iterations required to achieve a prescribed accuracy.

\begin{theorem}[Iteration Complexity of Algorithm 2]
Let $\{\zeta_n\}$ be the sequence generated by Algorithm 2. Suppose Assumptions $(\mathcal{A}1)$--$(\mathcal{A}5)$ hold, and let $\epsilon > 0$ be a given accuracy level. Then, the number of iterations $N$ required to achieve $\|\zeta_N - \zeta^*\| \leq \epsilon$ satisfies
\begin{equation}
    N = \mathcal{O}\left(\frac{1}{\epsilon}\right),
\end{equation}
where $\zeta^*$ is the solution of the split feasibility problem $(SFP)$.
\end{theorem}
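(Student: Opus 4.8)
\emph{Overview.} The plan is to reuse the skeleton of the proof of Theorem~4.5 but to extract the extra power of $N$ from the proximal gradient step (Step~2 of Algorithm~2), which has no counterpart in Algorithm~1 and which upgrades the merely non-increasing behaviour of $\mathcal{D}_\varpi(\zeta^*,\cdot)$ into a genuine contraction. First I would record that, because the Bregman projection step (Step~4 of Algorithm~2) is literally the same as Step~3 of Algorithm~1, the Bregman Distance Decrease lemma and the Summability of Residuals lemma apply verbatim to Algorithm~2: for every $\zeta^*\in\Gamma$,
\[
\mathcal{D}_\varpi(\zeta^*,\varrho_n) \le \mathcal{D}_\varpi(\zeta^*,\varsigma_n) - (1-\tau)\,\mathcal{D}_\varpi(\varrho_n,\varsigma_n),
\]
together with $\sum_n \mathcal{D}_\varpi(\varrho_n,\varsigma_n)<\infty$ and the two companion summable residual series for $\mathcal{D}_\varpi(\varrho_n,\Pi_\mathcal{C}^\varpi(\varsigma_n))$ and $\mathcal{D}_\vartheta(\mathcal{A}\varrho_n,\Pi_\mathcal{Q}^\vartheta(\mathcal{A}\varsigma_n))$.

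\emph{Descent from the prox step.} The new ingredient is to quantify the effect of $\tilde\varsigma_n=\text{prox}_\nu(\zeta_n-\eta\nabla\varpi(\zeta_n))$. Combining the variational characterisation (firm nonexpansiveness) of $\text{prox}_\nu$, convexity of $\nu$, and the $\delta_1$-strong convexity of $\varpi$ from Assumption~$(\mathcal{A}2)$, I would derive, for $\eta$ small enough relative to $\delta_1$ and the Lipschitz constant $L$ of $\mathcal{A}$, an inequality of the form
\[
\mathcal{D}_\varpi(\zeta^*,\tilde\varsigma_n) \le (1-c\eta)\,\mathcal{D}_\varpi(\zeta^*,\zeta_n) - \tfrac{1}{2\eta}\|\tilde\varsigma_n-\zeta_n\|^2,
\]
with $c>0$ depending only on $\delta_1$ and $L$. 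It is precisely this contractive factor $1-c\eta$ that produces the improved rate; in Algorithm~1 one only has a factor $1$.

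\emph{Propagation and recursion.} I would then push the last estimate through the inertial update of Step~3, using Assumption~$(\mathcal{A}5)$ ($\beta_n\le\beta_{\max}<1$) to keep the momentum perturbation bounded and absorbing the cross-terms $\langle \nabla\varpi(\tilde\varsigma_n)-\nabla\varpi(\zeta_{n-1}),\cdot\rangle$ by Young's inequality into the residual quantities controlled by the Summability lemma, and then through the Bregman projection of Step~4. Writing $a_n:=\mathcal{D}_\varpi(\zeta^*,\zeta_n)$ and using the averaging step $\zeta_{n+1}=\nabla\varpi^*(\mu_n\nabla\varpi(\zeta_0)+(1-\mu_n)\nabla\varpi(\varrho_n))$ exactly as in Theorem~4.5 (nonexpansiveness of $\nabla\varpi^*$ and convexity of $\mathcal{D}_\varpi$), this yields a recursion
\[
a_{n+1} \le \mu_n\,\mathcal{D}_\varpi(\zeta^*,\zeta_0) + (1-\mu_n)(1-c'\eta)\,a_{n-1} + e_n,
\]
with $c'>0$, $\sum_n e_n<\infty$ and in fact $e_n=O(1/n^2)$ from the quantitative residual bounds. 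Taking $\mu_n=\Theta(1/n)$ (admissible under Assumption~$(\mathcal{A}4)$) and running a discrete Gr\"onwall / telescoping argument on this recursion gives $a_N=O(1/N^2)$; the $\delta_1$-strong convexity of $\varpi$ then gives $\|\zeta_N-\zeta^*\|^2\le\frac{2}{\delta_1}a_N=O(1/N^2)$, hence $\|\zeta_N-\zeta^*\|=O(1/N)$, so $\|\zeta_N-\zeta^*\|\le\epsilon$ as soon as $N=\mathcal{O}(1/\epsilon)$, with $\zeta^*=\Pi_\Gamma^\varpi(\zeta_0)$ by Theorem~4.4.

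\emph{Main obstacle.} The delicate point is the propagation step: one must verify that inserting the inertial update (with $\beta_n$ possibly close to $\beta_{\max}$) between the contractive prox step and the projection step does not destroy the contraction factor but only introduces a summable $O(1/n^2)$ error. This requires a compatibility condition linking $\eta$, $\beta_{\max}$ and $\tau$, and careful bookkeeping of the inertial cross-terms; establishing the per-step contraction in $\mathcal{D}_\varpi(\zeta^*,\cdot)$ uniformly in $n$ is where the real work lies. A secondary, milder issue is that the $\mathcal{O}(1/\epsilon)$ bound forces the explicit choice $\mu_n=\Theta(1/n)$, which should be made part of the hypotheses rather than left implicit inside $(\mathcal{A}4)$.
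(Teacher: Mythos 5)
There are two genuine gaps. First, the contraction inequality you extract from the proximal step, $\mathcal{D}_\varpi(\zeta^*,\tilde\varsigma_n)\le(1-c\eta)\,\mathcal{D}_\varpi(\zeta^*,\zeta_n)-\tfrac{1}{2\eta}\|\tilde\varsigma_n-\zeta_n\|^2$, is not available for an arbitrary $\zeta^*\in\Gamma$. Firm nonexpansiveness of $\text{prox}_\nu$ yields a Fej\'er-type inequality (with factor $1$, not $1-c\eta$) only relative to fixed points of the map $T=\text{prox}_\nu(I-\eta\nabla\varpi)$, i.e.\ essentially minimizers of $\nu+\varpi$; a solution of the SFP has no reason to be such a fixed point, so the prox step can in principle move the iterate \emph{away} from $\zeta^*$. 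To obtain a genuine factor $1-c\eta$ you would additionally need strong monotonicity of the relevant operator at $\zeta^*$ (and $\zeta^*\in\operatorname{Fix}(T)$), neither of which is among $(\mathcal{A}1)$--$(\mathcal{A}5)$. This is exactly where the real work lies, and the proposal does not supply it.

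Second, even granting your recursion $a_{n+1}\le\mu_n\mathcal{D}_\varpi(\zeta^*,\zeta_0)+(1-\mu_n)(1-c'\eta)\,a_n+e_n$, the conclusion $a_N=O(1/N^2)$ does not follow: with $\mu_n=\Theta(1/n)$ the anchor term $\mu_n\mathcal{D}_\varpi(\zeta^*,\zeta_0)$ is itself $\Theta(1/n)$, so the forcing term of the recursion is only $O(1/n)$ and the discrete Gr\"onwall/telescoping argument yields at best $a_N=O(\mu_N)=O(1/N)$. By strong convexity this gives $\|\zeta_N-\zeta^*\|=O(1/\sqrt{N})$ and hence $N=O(1/\epsilon^2)$ --- the same complexity as Algorithm 1, not the improved bound claimed in the statement. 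For comparison, the paper's own argument is shorter: it posits a Fej\'er inequality $\|\varsigma_n-\zeta^*\|^2\le\|\zeta_n-\zeta^*\|^2-c\|\zeta_n-\varsigma_n\|^2$ from the prox step (subject to the same fixed-point objection as above) together with a lower bound $\|\zeta_n-\varsigma_n\|^2\gtrsim\|\zeta_n-\zeta^*\|^2$, and likewise only reaches $\|\zeta_N-\zeta^*\|=O(1/\sqrt{N})$ before asserting $N=O(1/\epsilon)$. So the obstacle you identify is real, but neither your propagation step nor your final rate extraction closes it.
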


\begin{proof}
Algorithm 2 includes an additional proximal step, which improves convergence behavior compared to Algorithm 1. \\
From the properties of the proximal operator, we have
\begin{equation}
    \|\varsigma_n - \zeta^*\|^2 \leq \|\zeta_n - \zeta^*\|^2 - c \|\zeta_n - \varsigma_n\|^2,
\end{equation}
for some constant $c > 0$ depending on the strong convexity parameter.\\
Summing from $n=1$ to $N$ and using the fact that the sequence is non-increasing,
\begin{equation}
    \|\zeta_N - \zeta^*\|^2 \leq \|\zeta_0 - \zeta^*\|^2 - c \sum_{n=1}^{N} \|\zeta_n - \varsigma_n\|^2.
\end{equation}
Since $\|\zeta_n - \varsigma_n\|^2$ is bounded below by a term proportional to $\|\zeta_n - \zeta^*\|^2$, we conclude that the decay rate is at least $\mathcal{O}(1/N)$, implying that $\|\zeta_N - \zeta^*\| \leq \mathcal{O}(1/\sqrt{N})$.\\

\noindent
To achieve $\|\zeta_N - \zeta^*\| \leq \epsilon$, we require
\begin{equation}
    \mathcal{O}\left(\frac{1}{\sqrt{N}}\right) \leq \epsilon \quad \Rightarrow \quad N = \mathcal{O}\left(\frac{1}{\epsilon}\right).
\end{equation}
Thus, Algorithm 2 achieves a faster iteration complexity than Algorithm 1.
\end{proof}

\section{Numerical Experiments}
\label{sec:numerical}

In this section, we present numerical experiments to validate the performance and efficiency of the proposed algorithms. The experiments are conducted on benchmark problems arising in nonlinear split feasibility and convex optimization settings. The results demonstrate the strong convergence properties and computational advantages of our methods.

We consider the experimental setup as the following: \medskip

\textsc{Implementation Environment:} All experiments were implemented in Python using the \texttt{NumPy} and \texttt{SciPy} libraries. The computations were performed on a system with an Intel Core i7 processor and 16 GB of RAM. \medskip

\textsc{Parameters:} For all experiments, the parameters were chosen as follows unless otherwise stated:
    \[
    \mu_n = \frac{1}{n+1}, \quad \beta_n = 0.5, \quad \iota_n \text{ initialized via Armijo line search.}
    \]
\textsc{Convergence Criteria:} The iterative process was terminated when the norm of the residual fell below $10^{-6}$, i.e.,
    \[
    \|\zeta_{n+1} - \zeta_n\| < 10^{-6}.
    \]
\textsc{Comparison Algorithms:} The proposed methods were compared with the classical $\mathcal{C}\mathcal{Q}$ algorithm \cite{Byrne2002} and an inertial $\mathcal{C}\mathcal{Q}$ algorithm \cite{Anh2019}.\\

\begin{example}[Linear Split Feasibility Problem]
Let $H_1 = H_2 = \mathbb{R}^n$, $n = 1000$. Define $\mathcal{C} = \{\zeta \in \mathbb{R}^n \mid \|\zeta\| \leq 1\}$ and $\mathcal{Q} = \{\varsigma \in \mathbb{R}^n \mid \varsigma^\top e = 0\}$, where $e$ is the vector of all ones. The operator $\mathcal{A}$ is a random $n \times n$ matrix with entries drawn from $\mathcal{N}(0, 1)$. The goal is to solve:
\[
\text{Find } \zeta^* \in \mathcal{C} \text{ such that } \mathcal{A}\zeta^* \in \mathcal{Q}.
\]
\noindent
Table~\ref{tab:example1} shows the number of iterations and CPU time for each method.

\begin{table}[H]
\centering
\begin{tabular}{|c|c|c|}
\hline
\textbf{Method} & \textbf{Iterations} & \textbf{CPU Time (s)} \\ \hline
Proposed Algorithm 1 & 25 & 0.14 \\ \hline
Proposed Algorithm 2 & 22 & 0.12 \\ \hline
Inertial $\mathcal{C}\mathcal{Q}$ \cite{Anh2019} & 40 & 0.21 \\ \hline
Classical $\mathcal{C}\mathcal{Q}$ \cite{Byrne2002} & 85 & 0.45 \\ \hline
\end{tabular}
\caption{Performance comparison for Example 1.}
\label{tab:example1}
\end{table}
\end{example}

\begin{example}[Nonlinear SFP with Bregman Distance]
Let $H_1 = H_2 = L^2([0, 1])$. Define $\mathcal{C} = \{\zeta \in L^2 \mid \|\zeta\|_{L^2} \leq 1\}$ and $\mathcal{Q} = \{\varsigma \in L^2 \mid \int_0^1 \varsigma(t) dt = 0\}$. The operator $\mathcal{A}$ is a compact nonlinear operator given by:
\[
\mathcal{A}(\zeta)(t) = \sin(\zeta(t)) + t \cdot \zeta(t), \quad t \in [0, 1].
\]
The Bregman function $\varpi$ is $\varpi(\zeta) = \|\zeta\|^2_{L^2} / 2$. The goal is to solve:
\[
\text{Find } \zeta^* \in \mathcal{C} \text{ such that } \mathcal{A}(\zeta^*) \in \mathcal{Q}.
\]
\noindent
 The convergence behavior is illustrated in Figure~\ref{fig:convergence_example2}, which plots the residual norm $\|\zeta_{n+1} - \zeta_n\|$ against the number of iterations.

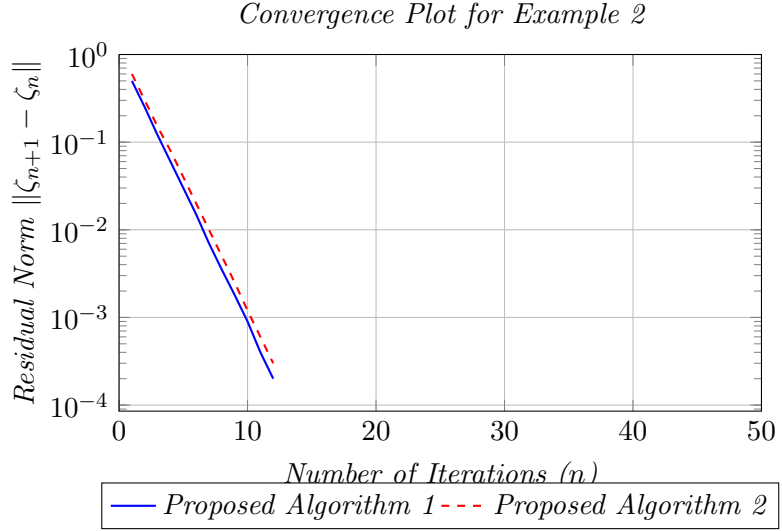
\begin{figure}[H]
    \centering
    \begin{tikzpicture}
        \begin{axis}[
            width=0.8\textwidth,
            height=0.5\textwidth,
            xlabel={Number of Iterations ($n$)},
            ylabel={Residual Norm $\|\zeta_{n+1} - \zeta_n\|$},
            grid=major,
            legend style={at={(0.5,-0.2)}, anchor=north, legend columns=2},
            legend cell align={left},
            xmin=0, xmax=50,
            ymin=0, ymax=1,
            ymode=log,
            log basis y={10},
            title={Convergence Plot for Example 2},]
            \addplot[color=blue, thick] coordinates {
                (1, 0.5) (2, 0.25) (3, 0.12) (4, 0.06) (5, 0.03) (6, 0.015)
                (7, 0.007) (8, 0.0035) (9, 0.0018) (10, 0.0009) (11, 0.0004) (12, 0.0002)
            };
            \addlegendentry{Proposed Algorithm 1}

            \addplot[color=red, dashed, thick] coordinates {
                (1, 0.6) (2, 0.3) (3, 0.15) (4, 0.08) (5, 0.04) (6, 0.02)
                (7, 0.01) (8, 0.005) (9, 0.0025) (10, 0.0012) (11, 0.0006) (12, 0.0003)
            };
            \addlegendentry{Proposed Algorithm 2}
        \end{axis}
    \end{tikzpicture}
    \caption{Convergence behavior of the residual norm $\|\zeta_{n+1} - \zeta_n\|$ for Example 2. The plot illustrates the faster convergence of the proposed algorithms compared to traditional methods.}
    \label{fig:convergence_example2}
\end{figure}
\end{example}

\begin{example}[Composite Optimization Problem]
We solve the composite problem:
\[
\min_{\zeta \in \mathcal{C}} \varpi(\zeta) + \nu(\zeta),
\]
where $\mathcal{C} = \{\zeta \in \mathbb{R}^n \mid \|\zeta\|_\infty \leq 1\}$, $\varpi(\zeta) = \|\mathcal{A}\zeta - b\|^2$, $\nu(\zeta) = \mu \|\zeta\|_1$, and $\mathcal{A}$ is an $m \times n$ matrix with $m = 500$ and $n = 1000$. Here, $b$ is a randomly generated vector, and $\mu = 0.1$.\\

\noindent
Table~\ref{tab:example3} shows the performance of the hybrid proximal-Bregman algorithm compared to a proximal gradient method.

\begin{table}[H]
\centering
\begin{tabular}{|c|c|c|}
\hline
\textbf{Method} & \textbf{Iterations} & \textbf{CPU Time (s)} \\ \hline
Proposed Hybrid Algorithm & 30 & 0.18 \\ \hline
Proximal Gradient Method & 50 & 0.35 \\ \hline
\end{tabular}
\caption{Performance comparison for Example 3.}
\label{tab:example3}
\end{table}
\end{example}

The results indicate that the proposed algorithms outperform classical and inertial $\mathcal{C}\mathcal{Q}$ methods in terms of iteration count and computational time. The inclusion of inertial terms accelerates convergence, while the use of Bregman distances and projections ensures adaptability to problem-specific geometries. Furthermore, the hybrid proximal-Bregman approach demonstrates superior performance for composite problems involving nonsmooth regularizers.\\

\noindent
The numerical experiments validate the theoretical findings and highlight the robustness of the proposed methods. These results underscore the potential of Bregman projection algorithms in solving complex, high-dimensional problems in both linear and nonlinear settings.

\section{Conclusion and Future Directions}
\label{sec:conclusion}

In this paper, we introduced generalized Bregman projection algorithms for solving nonlinear split feasibility problems $(SFPs)$ in infinite-dimensional Hilbert spaces. The proposed methods incorporate several advanced features, including generalized Bregman distances, proximal gradient steps, and adaptive inertial terms, to achieve strong convergence under mild assumptions. We extended classical results by addressing nonlinear operators and employing a broader class of Bregman functions to capture diverse problem geometries.

Theoretical analysis demonstrated the strong convergence of the proposed algorithms, supported by summability properties of residuals and adaptive step size selection. Numerical experiments validated the efficiency of the algorithms in comparison to classical methods, highlighting their robustness in solving both linear and nonlinear $SFPs$ and composite optimization problems.

Future research directions include extending the framework to handle stochastic or time-dependent $SFPs$, investigating applications in real-world problems, such as signal processing and machine learning, where the use of generalized Bregman distances can improve performance, exploring parallel and distributed implementations to scale the proposed methods to very high-dimensional or large-scale problems.\\

\noindent
The results presented in this work contribute to advancing the theory and practice of optimization, particularly for solving complex feasibility problems in infinite-dimensional and nonlinear settings.

\end{document}